\newtheorem{theorem}{Theorem}[section]
\newtheorem*{theorem*}{Theorem}
\newtheorem*{problem}{Problem}
\newtheorem{lemma}[theorem]{Lemma}
\newtheorem{corollary}[theorem]{Corollary}
\newtheorem{definition}[theorem]{Definition}
\newenvironment{notation}{\vspace{.5\baselineskip} \noindent \begin{rm}{\bf Notation.}}{\end{rm}}
\newenvironment{remark}{\vspace{.5\baselineskip} \noindent \begin{rm}{\bf Remark.}}{\end{rm}}
\newtheorem{example}[theorem]{Example}
\numberwithin{equation}{section}
\newcommand{\invlim}{\underleftarrow{\lim}}
\newenvironment{extrainfo}
  {\global\setbox\extrainfobox=\vbox\bgroup\parindent=0pt }
  {\egroup}
\newsavebox\extrainfobox
\begin{document}

\title[Connected Generalized Inverse Limits and IVP]{Connected Generalized Inverse Limits and Intermediate Value Property}

\author{Tavish J. Dunn}

\address{Baylor University, Sid Richardson Building \\  1410 S. 4th St. \\ Waco, Texas 76706}

\email{tavish\_dunn@baylor.edu}

\subjclass[2020]{Primary 54F17; Secondary 54F15}

\keywords{connected, generalized inverse limit, set-valued function, intermediate value property}
\thanks{Declarations of Interest: none.}

\begin{extrainfo}

\end{extrainfo}

\begin{abstract}

In this paper, we consider inverse limits of $[0,1]$ using upper semicontinuous set-valued functions.  We introduce two generalizations of the Intermediate Value Property and prove that inverse limits with upper semicontinuous set-valued bonding functions are connected if the bonding functions are surjective, have connected graphs, and have either generalization of the Intermediate Value Property. Examples are given to demonstrate that if any of the conditions is dropped, the result does not hold in general. An example is given to show that an inverse limit may be connected even if the bonding functions do not have either Intermediate Value Property. Further, we compare the structures of set-valued functions with the two types of the Intermediate Value Property.

\end{abstract}

\maketitle

\section{Introduction}
Inverse limits have been objects of study for decades, for both continua and dynamical systems. In 2004 and 2006, Mahavier and Ingram introduced the concept of inverse limits with upper semicontinuous (usc) set-valued functions \cite{IngramMahavier}, \cite{Mahavier}. Since then, extensive work has been done to determine conditions under which various results for classical inverse limits can be generalized to inverse limits with usc functions.

One property of classical inverse limits that does not hold in general for inverse limits of usc functions is that the inverse limit of continua is a continuum. Ingram and Mahavier found in \cite{IngramMahavier} that even inverse limits of usc functions on $[0,1]$ are compact and nonempty but not necessarily connected. Ingram presents the following problems in \cite{Ingram}:

\begin{problem}
\textbf{6.1} Characterize connectedness of inverse limits on continua with upper semicontinuous bonding functions. 
\end{problem}

\begin{problem}\textbf{6.2}
Characterize connectedness of inverse limits on continua with upper semicontinuous bonding functions on $[0,1]$. 
\end{problem}

\begin{problem}\textbf{6.3}
Find sufficient conditions that an inverse limit on continua with upper semicontinuous bonding functions be a continuum. 
\end{problem}

\begin{problem}\textbf{6.4}
Solve Problem 6.3 on $[0,1]$.
\end{problem}

In \cite{Mahavier}, Mahavier found that for a usc function $f:[0,1]\rightarrow2^{[0,1]}$, $\invlim\{[0,1],f\}$ is connected if each of its Mahavier Products
\[G_{n}(f)=\left\{(x_{0},x_{1},x_{2},\dots)\in\prod_{i\in\omega}[0,1]:x_{i}\in f(x_{i+1}) \ \forall i\leq n\right\}
\]
is connected. In \cite{IngramMahavier}, Ingram and Mahavier showed that, if for each $i\in\omega$, $f_{i}:X_{i+1}\rightarrow 2^{X_{i}}$ is a usc function such that $f_{i}(x)$ is connected for each $x\in X_{i+1}$, then $\invlim\{X_{i},f_{i}\}$ is connected. Nall also extended this in \cite{Nall} to show that $\invlim F$ is connected if $F:X\rightarrow 2^{X}$ is a surjective usc function with connected graph $G(F)$ such that $G(F)=\bigcup_{\alpha}G(f_{\alpha})$, where each $F_{\alpha}$ is usc and $F_{\alpha}(x)$ is connected for each $x\in X$.

Greenwood and Kennedy developed a characterization for the connectedness of inverse limits of usc functions on $[0,1]$ that are surjective and have connected graphs in \cite{GreenwoodKennedy} and refined the result in \cite{GreenwoodKennedy2}. Their necessary and sufficient condition for showing the inverse limit is disconnected involves finding a finite sequence of certain subsets of the graphs of consecutive bonding functions and using this sequence to construct a proper component of the inverse limit.

In Section~2 we introduce two generalized notions of the Intermediate Value Property that are applicable to set-valued functions. In Section~3 we use the Greenwood and Kennedy's characterization in \cite{GreenwoodKennedy2} to show that inverse limits of usc functions with either of these two notions are connected. The main result of the paper is the following which can be found in Theorem \ref{InvLim}:

\begin{theorem*}
For each $n\in\mathbb{N}$, let $f_{n}:[0,1]\rightarrow 2^{[0,1]}$ be a surjective, upper-semicontinuous function with the Weak Intermediate Value Property and a connected graph $G(f_{n})$. Then $\invlim\{[0,1],f_{n}\}$ is connected.
\end{theorem*}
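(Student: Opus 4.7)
The plan is to argue by contradiction using Greenwood and Kennedy's characterization of disconnectedness from \cite{GreenwoodKennedy2}. Assume $\invlim\{[0,1],f_{n}\}$ is disconnected. The cited characterization then produces a finite sequence of distinguished subsets of consecutive graphs $G(f_{i})$ whose projections to the factor copies of $[0,1]$ interlock so as to yield a proper clopen subset of the inverse limit. I would first extract this sequence explicitly and translate its requirements into statements about closed subsets $A_{i}, B_{i}$ partitioning (pieces of) $G(f_{i})$ and how $\pi_{1}(A_{i}), \pi_{2}(A_{i})$ match with $\pi_{1}(A_{i+1}), \pi_{2}(A_{i+1})$.

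The core of the argument would then be a single ``local obstruction'' lemma for one bonding function: if $f:[0,1]\rightarrow 2^{[0,1]}$ is surjective, usc, has connected graph $G(f)$, and satisfies the Weak Intermediate Value Property, then no pair of closed sets witnessing one link of a Greenwood--Kennedy sequence can exist in $G(f)$. Connectedness of $G(f)$ alone rules out a global split of the graph, but one needs the Weak IVP to close a subtler gap: even projections of ``partial'' separating sets must cover enough of $[0,1]$ that the required mismatch between consecutive links cannot occur. The Weak IVP supplies the requisite ``in-between'' fibers: wherever values are attained on two sides, intermediate values must also be attained, which forces the projections $\pi_{2}(A_{i})$ and $\pi_{2}(B_{i})$ to overlap on an interval that contradicts the separating role demanded by the characterization.

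Once the local obstruction lemma is in hand, I would plug it into the Greenwood--Kennedy characterization and conclude that no finite witnessing sequence can exist, so the inverse limit must in fact be connected. The main obstacle is the local lemma itself: the Weak IVP is a pointwise/graph-theoretic condition while Greenwood--Kennedy's sequence is phrased in terms of closed sets and their projections, so the bulk of the technical work will be in reconciling these two viewpoints and verifying that the precise form of the Weak IVP introduced in Section~2 is strong enough to preclude the projection patterns that a disconnection would force. A secondary subtlety is that surjectivity and connectedness of $G(f_{n})$ must both be used essentially: without surjectivity the projections could miss values the Weak IVP would otherwise control, and without connected graphs the sequence can be built even under the Weak IVP (as the counterexamples promised in the abstract presumably show).
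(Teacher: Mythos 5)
Your overall strategy---assume disconnectedness, invoke the Greenwood--Kennedy characterization, and defeat it with a ``local obstruction'' lemma about a single bonding function---is exactly the route the paper takes. However, the proposal leaves the local obstruction lemma, which is the entire mathematical content of the proof, as an acknowledged ``main obstacle,'' and the sketch you give of it points in a slightly wrong direction. The paper's obstruction is not about the vertical projections $\pi_{2}(A_{i})$, $\pi_{2}(B_{i})$ overlapping on an interval; it is a statement about the \emph{horizontal} reach of components of the graph restricted to a vertical strip. Concretely, the key lemma (Theorem \ref{Intersection}) is that for a usc $f$ with connected graph, the Weak Intermediate Value Property is equivalent to the condition that for all $a\leq b$, every component of $G(f)\cap\bigl([a,b]\times[0,1]\bigr)$ meets both $\{a\}\times[0,1]$ and $\{b\}\times[0,1]$. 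The first link of a CC-sequence (condition (1) of Definition \ref{C-Sequence}) requires an $L$-set or $R$-set, i.e.\ a component of the graph over a strip that stays away from one side of the strip; Theorem \ref{Intersection} rules this out directly (Theorem \ref{LR}), so the chain can never start. Your proposal does not identify this equivalence, and without it there is no argument.

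Two further points. First, you anticipate having to analyze the entire interlocking sequence of links; in fact only the very first condition of the CC-sequence needs to be refuted, which is why the paper's proof of Theorem \ref{InvLim} is three lines once Theorem \ref{LR} is in hand. Second, your claim that surjectivity is needed ``so the projections cannot miss values the Weak IVP would otherwise control'' is not how surjectivity enters: it is simply a hypothesis of the Greenwood--Kennedy theorem itself (as is connectedness of each $G(f_{n})$), and Example \ref{NotSurjective} shows the conclusion fails without it. To complete your proof you would need to (i) prove the equivalence of the Weak IVP with the component-spanning property, which itself requires a Cut-Wire argument together with the existence of a spanning component (Lemma \ref{Existence}), and (ii) check that this property contradicts the $L$-set/$R$-set requirement in condition (1).
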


The utility of this theorem lies in the fact that its conditions are properties of the individual bonding functions, without regard to their placement in the inverse sequence. We give examples in Section~4 to demonstrate that the assumptions in the above theorem cannot be dropped and that the Weak Intermediate Value Property is not a necessary condition for the inverse limit to be connected.

This paper is part of a broader study on the Intermediate Value Property as it relates to generalized inverse limits. Two generalized notions of the Intermediate Value Property, named the Intermediate Value Property and Weak Intermediate Value Property, are introduced.  These two properties are distinct from each other, but when applied to single-valued functions, both of these notions are equivalent to the classical Intermediate Value Property. The Weak Intermediate Value Property is sufficient to prove the main results of this paper. But in other areas of this study involving orbits of usc functions, the Intermediate Value Property leads to a generalization Sharkovskii's Theorem on the existence of periodic orbits of a function with a known periodic point \cite{OteyRyden} and a connection between periodicity in the bonding function and indecomposability of the inverse limit \cite{DunnRyden}.

\section{Preliminary Definitions and Notation}
Here a \emph{continuum} will refer to a nonempty, compact, connected metric space. For a continuum $X$, we denote the collection of nonempty compact subsets of $X$ by $2^{X}$ and denote the collection of nonempty subcontinua of $X$ by $C(X)$. The \emph{graph} of a function $f:[a,b]\rightarrow2^{[c,d]}$ is the set $G(f)=\{(x,y)\in[a,b]\times[c,d]:y\in f(x)\}$.

\begin{definition}
\rm{A function $f:[a,b]\rightarrow2^{[c,d]}$ is \emph{upper semicontinuous at $x$} if for every open set $U$ containing $f(x)$ there is an open set $V$ containing $x$ such that $f(V)\subseteq U$. The function $f$ is \emph{upper semicontinuous} if it is upper semicontinuous at every point in its domain.}
\end{definition}

It is well known from \cite{IngramMahavier} that $f$ is upper semicontinuous if and only if $G(f)$ is closed. If $x_{1},x_{2}\in [a,b]$, let $\overline{x_{1}x_{2}}$ denote the closed interval with endpoints $x_{1}$ and $x_{2}$.
For any $0\leq a\leq b \leq 1$, let $V_{[a,b]} = [a,b]\times [0,1]$.

\begin{definition}
\rm{Let $X_{0},X_{1},X_{2},\dots$ be a sequence of continua and for all $i\in\omega$ let $f_{i+1}:X_{i+1}\rightarrow 2^{X_{i}}$ be usc. The \emph{inverse limit} of the pair $\{X_{i},f_{i}\}$ is the subspace of $\prod_{i\in\omega}$ given by
\[\invlim\{X_{i},f_{i}\}=\left\{\bm{x}=(x_{0},x_{1},\dots)\in\prod_{i\in\omega}X_{i}:x_{i-1}\in f_{i}(x_{i}) \ \forall i\geq1\right\}.
\]
The spaces $X_{i}$ are called the \emph{factor spaces} of the inverse limit, and the functions $f_{i}$ are the \emph{bonding functions}.}
\end{definition}

\begin{definition}
\rm{Let $f:[a,b]\rightarrow 2^{[c,d]}$ be an upper semicontinuous function. We say $f$ has the \emph{Intermediate Value Property} if, given distinct $x_{1},x_{2}$, distinct $y_{1}\in f(x_{1})$, $y_{2}\in f(x_{2})$, and $y$ strictly between $y_{1}$ and $y_{2}$, there is some $x$ strictly between $x_{1}$ and $x_{2}$ such that $y\in f(x)$.

We say $f$ has the \emph{Weak Intermediate Value Property} if, given distinct $x_{1}$, $x_{2}$, and $y_{1}\in f(x_{1})$ there is some $y_{2}\in f(x_{2})$ such that if $y$ is between $y_{1}$ and $y_{2}$, then there is $x$ between $x_{1}$ and $x_{2}$ such that $y\in f(x)$.}
\end{definition}

Note that in the previous definition we do not specify if $x_{2}$ is larger than $x_{1}$. For the Intermediate Value Property, this makes no difference as the definition is symmetric with respect to $x_{1}$ and $x_{2}$. However in the definition of the Weak Intermediate Value Property, $y_{2}$ is dependent on $x_{1}$, $y_{1}$, and $x_{2}$, so order matters. So for a function to have the Weak Intermediate Value Property, it is necessary for the condition to hold when $x_{2}>x_{1}$ and $x_{1}>x_{2}$. An example of a surjective usc function $f:[0,1]\rightarrow2^{[0,1]}$ where the condition holds only for $x_{2}>x_{1}$ yet the inverse limit is disconnected can be found in Example \ref{NoIVP}.

\begin{notation} Let $i>0$, $\epsilon>0$, and $A_{i}=[a_{i},b_{i}]$ be a subset of $[0,1]$ for each $i$. For $j\in\{i,i-1\}$, define
\begin{align}
J_{j} &= [0,a_{j}),  &R_{i} &= (K_{i}\times [0,1])\cup Z_{i}, \nonumber \\
K_{j} &= (b_{j},1]  &TL_{i} &= T_{i}\cup L_{i}, \nonumber \\
Z_{i} &= A_{i}\times A_{i-1} ,  &TR_{i} &= T_{i}\cup R_{i}, \nonumber \\
T_{i} &= ([0,1]\times K_{i-1}) \cup Z_{i}, &BL_{i} &= B_{i}\cup L_{i} \nonumber \\
B_{i} & = ([0,1]\times J_{i-1})\cup Z_{i}, &BR_{i} &= B_{i}\cup R_{i}, \nonumber \\
L_{i} &=(J_{i}\times[0,1])\cup Z_{i}, & & \nonumber
\end{align}
$Z_{i}(\epsilon)= ((a_{i}-\epsilon,b_{i}+\epsilon)\times (a_{i-1}-\epsilon,b_{i-1}+\epsilon))\cap([0,1]\times[0,1]).$
\end{notation}

\begin{definition}
\rm{Suppose $i>0$ and $f:[0,1]\rightarrow 2^{[0,1]}$ is usc. If for each $j\in\{i,i-1\}$, $A_{i} = [a_{i},b_{i}] \subsetneq[0,1]$, either 
\begin{itemize}
\item $S\in \{BL, BR, TL, TR\}$, or 
\item $A_{i}\cap \{0,1\}=\emptyset$ and $S\in \{L_{i},R_{i}\}$, or 
\item $A_{i-1} \cap\{0,1\}=\emptyset$ and $S\in \{B_{i},T_{i}\}$,
\end{itemize}
and there exists $\epsilon >0$ and a component $C'$ of the set $G(f)\cap Z_{i}(\epsilon)$ such that $C'\subset S$, then any component $C$ of $C'\cap Z_{i}$ is an \emph{$S$-set in $G(f)$ framed by $A_{i}\times A_{i-1}$}, denoted $G(f)\sqsubset_{C}S$.}
\end{definition}

In the following definition, only condition $(1)$ is used for the purpose of proving our main result. But for the sake of completeness and to properly introduce Greenwood's and Kennedy's criterion for connected inverse limits of surjective usc functions on $[0,1]$ with connected graphs, we present the full definition of a CC-sequence.

\begin{definition} \label{C-Sequence}
\rm{Suppose for each $i>0$, $f_{i+1}:[0,1]\rightarrow 2^{[0,1]}$ is a surjective upper-semicontinuous function with a connected graph, denoted $G_{i+1}$, and $m,n\in\mathbb{N}$ are such that $m+1<n$. Suppose that there exist
\begin{itemize}
\item a closed interval $A_{i}\subsetneq[0,1]$ for each $i$, $m\leq i\leq n$, and
\item a point \[\langle p_{k}\rangle_{k\in\omega} \in\invlim\{[0,1],f_{i}\}\cap\left(\prod_{i<m}[0,1]\times \prod_{m\leq i\leq n}A_{i}\times \prod_{i>n}[0,1]\right).\]
\end{itemize}
For each $i>0$, let $C_{i}$ be the component of $G_{i}\cap Z_{i}$ containing $(p_{i},p_{i-1})$ and suppose the following properties hold:
\begin{enumerate}
\item $G_{m+1}\sqsubset_{C_{m+1}} R_{m+1}$ or $G_{m+1} \sqsubset_{C_{m+1}} L_{m+1}$; 
\item \begin{itemize}
\item if $n=m+2$, then  $G_{m+2} \sqsubset_{C_{m+2}} T_{m+2}$ if $G_{m+1} \sqsubset_{C_{m+1}} L_{m+1}$, and $G_{m+2}\sqsubset_{C_{m+2}} B_{m+2}$ if $G_{m+1}\sqsubset_{C_{m+1}}R_{m+1}$; 
\item if $n>m+2$, then $G_{m+2}\sqsubset_{C_{m+2}}BR_{m+2}$ or $G_{m+2}\sqsubset_{C_{m+2}}BL_{m+2}$ if $G_{m+1}\sqsubset_{C_{m+1}}R_{m+1}$, and $G_{m+2}\sqsubset_{C_{m+2}}TL_{m+2}$ or $G_{m+2}\sqsubset_{C_{m+2}}TR_{m+2}$ if $G_{m+1}\sqsubset_{C_{m+1}}L_{m+1}$;
\end{itemize}
\item if $m+2\leq i<n-1$, then $G_{i+1}\sqsubset_{C_{i+1}}BL_{i+1}$ or $G_{i+1}\sqsubset_{C_{i+1}}BR_{i+1}$, if $G_{i}\sqsubset_{C_{i}}BR_{i}$ or $G_{i}\sqsubset_{C_{i}}TR_{i}$, and $G_{i+1}\sqsubset_{C_{i+1}}TL_{i+1}$ or $G_{i+1}\sqsubset_{C_{i+1}}TR_{i+1}$ if $G_{i}\sqsubset_{C_{i}}BL_{i}$ or $G_{i}\sqsubset_{C_{i}}TL_{i}$;
\item if $n>m+2$, then $G_{n}\sqsubset_{C_{n}}B_{n}$ if $G_{n-1}\sqsubset_{C_{n-1}}BR_{n-1}$ or $G_{n-1}\sqsubset_{C_{n-1}}TR_{n-1}$, and $G_{n}\sqsubset_{C_{n}}T_{n}$ if $G_{n-1}\sqsubset_{C_{n-1}}BL_{n-1}$ or $G_{n-1}\sqsubset_{C_{n-1}}TL_{n-1}$.
\end{enumerate}
Then $\{f_{i}:i>0\}$ admits a \emph{component cropping sequence}, or \emph{CC-sequence}, \[\{A_{i}:m\leq i\leq n\},\] over $[m,n]$ with \emph{pivot point} $\langle p_{k}\rangle$. The collection $\{f_{i}:i>0\}$ of functions admits a CC-sequence if there exist $m,n\in\mathbb{N}$ such that $\{f_{i}:i>0\}$ admits a CC-sequence over $[m,n]$ with some pivot point.}
\end{definition}

\begin{theorem}[Greenwood, Kennedy \cite{GreenwoodKennedy2}] \label{CC}
Suppose that for each $i\geq 0$, $I_{i}$ is an interval, $f_{i+1}:I_{i+1}\rightarrow 2^{I_{i}}$ is a surjective upper-semicontinuous function, and $G(f_{i+1})$ is connected. The system admits a CC-sequence if and only if $\invlim\{I_{i},f_{i}\}$ is disconnected.
\end{theorem}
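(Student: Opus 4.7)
The plan is to prove the biconditional by separate arguments for each direction.

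\textbf{Sufficient direction (CC-sequence $\Rightarrow$ disconnected).} Given a CC-sequence $\{A_i:m\leq i\leq n\}$ with pivot $\langle p_k\rangle$, I would assemble a clopen separation of $\invlim\{I_i,f_i\}$ from the pieces supplied by the $S$-sets. For each $i\in[m+1,n]$, the $S_i$-set condition yields $\epsilon_i>0$ and a component $C_i'$ of $G(f_i)\cap Z_i(\epsilon_i)$ contained in $S_i$. Because components coincide with quasicomponents in compact Hausdorff spaces, each $C_i'$ admits a relatively clopen neighborhood $U_i$, chosen inside $S_i$, in some closed subneighborhood of $Z_i(\epsilon_i)$. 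Define
\[
H=\bigl\{(x_0,x_1,\dots)\in\invlim\{I_i,f_i\}:(x_i,x_{i-1})\in U_i\text{ for every }i\in[m+1,n]\bigr\}.
\]
Continuity of the coordinate projections and clopenness of the $U_i$ in their local slabs make $H$ clopen; the pivot lies in $H$, so $H$ is nonempty. The horizontal conditions $L$ or $R$ at the left end of the chain and the vertical conditions $T$ or $B$ at the right end prevent $H$ from exhausting the inverse limit, giving properness, while the corner conditions $BL,BR,TL,TR$ are exactly what is needed for the cuts at adjacent levels to be compatible under the bonding functions.

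\textbf{Necessary direction (disconnected $\Rightarrow$ CC-sequence).} Suppose $\invlim\{I_i,f_i\}=K\sqcup K'$ is a clopen separation. By the usual inverse-limit compactness argument, the separation is detectable at some finite window of coordinates: there exist indices $m<n$ and a positive distance between the projections of $K$ and $K'$ to the Mahavier product through level $n$. Choose a pivot $\langle p_k\rangle\in K$ whose relevant projection is as close to $K'$ as possible, and let $A_i$ be a small closed interval around $p_i$ chosen so that the local component structure of $G(f_i)\cap Z_i$ at $(p_i,p_{i-1})$ faithfully mirrors the global separation. The work is then to verify that the $S$-set directional conditions (1)--(4) hold for these data; this amounts to showing that, since $G(f_i)$ is connected but the local cut separates the component of $(p_i,p_{i-1})$ from the rest of the graph, the cut must confine that component to exactly one of the eight allowed regions, and the choice at level $i$ forces a compatible choice at level $i+1$ through the graph of $f_{i+1}$.

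\textbf{Main obstacle.} The principal difficulty is the necessity direction: producing the precise $S$-set chain from the raw clopen separation. One must use the connectedness of each $G(f_i)$ together with the compactness of the inverse limit to argue that a chain of directional cuts exists, that the endpoint conditions $L/R$ and $T/B$ arise naturally at the first and last cuts, that the corner conditions propagate inductively under the bonding functions, and that the chain terminates in finitely many steps. Handling the combinatorics of all eight $S$-types together with the branching structure of conditions (2)--(4) is the main technical burden; the sufficient direction is more routine once the clopen neighborhoods $U_i$ are extracted from the $S$-set data.
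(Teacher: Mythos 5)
This statement is not proved in the paper at all: it is the Greenwood--Kennedy characterization, imported as a black box from \cite{GreenwoodKennedy2}, where its proof occupies the bulk of a long paper. So there is no ``paper's own proof'' to compare against; the only question is whether your proposal stands on its own, and it does not. Both directions are left at the level of a strategy statement. In the necessity direction you explicitly defer the entire content of the theorem (``the work is then to verify that the $S$-set directional conditions (1)--(4) hold''); extracting a finite chain of framed $S$-sets from an arbitrary clopen separation, showing the endpoint conditions are forced to be $L/R$ and $T/B$, and showing the corner conditions propagate, is precisely the hard combinatorial core of the Greenwood--Kennedy argument, and nothing in your sketch indicates how it would be carried out.

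The sufficiency direction also has a concrete gap. The set $Z_i(\epsilon)$ is relatively open, so $G(f_i)\cap Z_i(\epsilon_i)$ need not be compact and the identification of components with quasicomponents does not apply directly; more seriously, your $U_i$ is only clopen relative to a local slab, so the set $H$ you define is not obviously clopen in $\invlim\{I_i,f_i\}$: a sequence of points of $H$ can converge to a point whose $(x_i,x_{i-1})$ coordinate lies in $G(f_i)\cap Z_i$ but escapes $U_i$ through the boundary of the slab, and ruling this out is exactly what the directional conditions are for. You assert that the $L/R$, $T/B$, and corner conditions ``are exactly what is needed'' without verifying it, which is again the substance of the proof. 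Since the paper deliberately cites this result rather than reproving it, the appropriate treatment here is a citation, not a sketch that omits the decisive steps.
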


We show that if all bonding functions have the Weak Intermediate Value Property, then $(1)$ from the above definition cannot be met, i.e. that the graph of such a function contains no $L$-sets or $R$-sets. This along with Theorem \ref{CC} is sufficient to establish the main result.

\section{Showing Connectedness of Inverse Limits}

\begin{lemma}\label{Existence}
Let $f:[0,1]\rightarrow 2^{[0,1]}$ be an upper semicontinuous function with $G(f)$ connected. Then for all $a\leq b$ there is some subcontinuum $C$ of $G(f)\cap V_{[a,b]}$ such that $C\cap \{a\}\times [0,1]\neq \emptyset$ and $C\cap \{b\}\times [0,1]\neq \emptyset$.
\end{lemma}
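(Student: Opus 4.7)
The plan is to argue by contradiction, reducing the existence of the desired subcontinuum to a well-known separation dichotomy for continua. First I would dispose of the trivial case $a=b$: then $V_{[a,b]}=\{a\}\times[0,1]$, and since $f(a)\neq\emptyset$, any singleton $\{(a,y)\}$ with $y\in f(a)$ is a subcontinuum of $G(f)\cap V_{[a,b]}$ meeting both slices (which coincide).

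For $a<b$, set $K=G(f)\cap V_{[a,b]}$ and
\[ H_a = G(f)\cap(\{a\}\times[0,1]), \qquad H_b = G(f)\cap(\{b\}\times[0,1]). \]
Both are nonempty (since $f(a),f(b)\neq\emptyset$), disjoint (since $a<b$), and closed in $K$. I would suppose, for contradiction, that no subcontinuum of $K$ meets both $H_a$ and $H_b$, and then invoke the standard theorem that in a compact Hausdorff space, disjoint closed subsets not joined by any subcontinuum can be separated (see, e.g., Nadler, \emph{Continuum Theory: An Introduction}, Theorem 5.2). This yields closed subsets $P,Q\subseteq K$ with $K=P\cup Q$, $P\cap Q=\emptyset$, $H_a\subseteq P$, and $H_b\subseteq Q$.

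The final step is to lift this separation of $K$ to a separation of the whole graph. Let $L=G(f)\cap V_{[0,a]}$ and $R=G(f)\cap V_{[b,1]}$; these are closed in $G(f)$, satisfy $L\cap K=H_a$ and $R\cap K=H_b$, and together with $K$ cover $G(f)$. Setting $U=L\cup P$ and $V=R\cup Q$, I would verify the four needed properties: $U,V$ are closed, nonempty (as $H_a\subseteq U$ and $H_b\subseteq V$), cover $G(f)$, and are disjoint, the disjointness using $L\cap R=\emptyset$ (since $a<b$), $L\cap Q\subseteq H_a\cap Q\subseteq P\cap Q=\emptyset$, $R\cap P\subseteq H_b\cap P=\emptyset$, and $P\cap Q=\emptyset$. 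This separates $G(f)$, contradicting its connectedness.

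The only nonroutine ingredient is the appeal to the continuum-separation theorem; everything else is bookkeeping. The mildly delicate point worth checking in writing is that $L\cap K$ really equals $H_a$ and $R\cap K$ equals $H_b$, which is immediate from $V_{[0,a]}\cap V_{[a,b]}=\{a\}\times[0,1]$ and the analogous identity at $b$.
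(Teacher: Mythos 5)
Your proposal is correct and follows essentially the same route as the paper: assume no subcontinuum of $G(f)\cap V_{[a,b]}$ joins the two vertical slices, apply the Cut-Wire/separation theorem to split $G(f)\cap V_{[a,b]}$ into disjoint closed sets containing $\{a\}\times f(a)$ and $\{b\}\times f(b)$, and then attach $G(f)\cap V_{[0,a]}$ and $G(f)\cap V_{[b,1]}$ to obtain a separation of $G(f)$, contradicting connectedness. Your treatment of the $a=b$ case and the explicit disjointness bookkeeping are slightly more careful than the paper's, but the argument is the same.
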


\begin{proof}
Let \[\mathcal{L} = \{H: \text{$H$ is a component of $G(f)\cap V_{[a,b]}$ and $H\cap \{a\}\times [0,1]\neq\emptyset$}\},\] \[\mathcal{R} = \{K: \text{$K$ is a component of $G(f)\cap V_{[a,b]}$ and $K\cap \{b\}\times [0,1]\neq\emptyset$}\}.\]

Note for all $H\in\mathcal{L}$ and $K\in\mathcal{R}$, either $H\cap K=\emptyset$ or $H=K$ by the maximality of components. Also $G(f)\cap V_{[a,b]} = \left(\bigcup \mathcal{L}\right)\cup \left(\bigcup\mathcal{R}\right)$; otherwise there would be some component $D$ of $G(f)\cap V_{[a,b]}$ that does not intersect either $\{a\}\times[0,1]$ or $\{b\}\times [0,1]$. Then $D$ would be a proper component of $G(f)$, contradicting the connectedness of $G(f)$.

Suppose that $\mathcal{L}\cap \mathcal{R}=\emptyset$. Then $f(a)$ and $f(b)$ are disjoint closed subsets of $G(f)\cap V_{[a,b]}$ such that no component of $G(f)\cap V_{[a,b]}$ intersects $f(a)$ and $f(b)$. Then by the Cut-Wire Theorem, there are two disjoint closed sets $A$ and $B$ such that $G(f)\cap V_{[a,b]} = A\cup B$, $f(a)\subseteq A$, and $f(b)\subseteq B$.  As each $H\in \mathcal{L}$ and $K\in\mathcal{R}$ is connected, $H\subseteq A$ and $K\subseteq B$. Thus $A=\bigcup\mathcal{L}$ and $B = \bigcup\mathcal{R}$. Then $A\cup \left(G(f)\cap V_{[0,a]}\right)$ and $B\cup \left(G(f)\cap V_{[b,1]}\right)$ are nonempty disjoint closed sets whose union is $G(f)$, contradicting the connectedness of $G(f)$.

\end{proof}

The following theorem provides a graphical characterization of the Weak Intermediate Value Property. A similar result characterizing the Intermediate Value Property is provided in Theorem \ref{IVPIntersection} for the purpose of comparison.

\begin{theorem} \label{Intersection}
Let $f:[0,1]\rightarrow 2^{[0,1]}$ be a usc function such that $G(f)$ is connected. The following are equivalent:
\begin{enumerate}
 \item $f$ has the Weak Intermediate Value Property.
 \item For all $a\leq b$, each component of $G(f)\cap V_{[a,b]}$ intersects both $\{a\}\times [0,1]$ and $\{b\}\times [0,1]$.
 \end{enumerate}
\end{theorem}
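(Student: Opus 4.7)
\medskip

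The plan is to establish each implication separately.

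For $(2) \Rightarrow (1)$, the argument is a direct construction. Given distinct $x_1, x_2$ (WLOG $x_1 < x_2$) and $y_1 \in f(x_1)$, I take $C$ to be the component of $G(f) \cap V_{[x_1, x_2]}$ containing $(x_1, y_1)$. Hypothesis $(2)$ yields a point $(x_2, y_2) \in C$. Since $C$ is a continuum, $\pi_2(C)$ is a connected subset of $[0, 1]$, hence an interval containing both $y_1$ and $y_2$. For every $y$ between them, $y \in \pi_2(C)$, so some $(x, y) \in C \subseteq V_{[x_1, x_2]}$ furnishes an $x \in [x_1, x_2]$ with $y \in f(x)$. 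This is the WIVP condition; the case $x_1 > x_2$ is symmetric.

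For $(1) \Rightarrow (2)$, I would argue the contrapositive. Suppose $C$ is a component of $G(f) \cap V_{[a, b]}$ with $C \cap \{a\} \times [0, 1] = \emptyset$ (the symmetric failure on the right is handled analogously). Set $x^* = \min \pi_1(C) > a$ and pick $(x^*, y^*) \in C$; the goal is to show WIVP fails at $x_1 = x^*$, $y_1 = y^*$, $x_2 = a$, i.e., that no $y_0 \in f(a)$ witnesses the WIVP condition.

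The key preliminary is that the component $D$ of the smaller set $K := G(f) \cap V_{[a, x^*]}$ containing $(x^*, y^*)$ also cannot meet $\{a\} \times [0, 1]$: if it did, then $C \cup D$ would be a connected subset of $G(f) \cap V_{[a, b]}$ containing $(x^*, y^*)$, forcing $D \subseteq C$ by maximality of $C$ and contradicting the hypothesis on $C$. Since components coincide with quasi-components in compact Hausdorff spaces, this yields a clopen partition $K = U \sqcup V$ with $D \subseteq U$, $\{a\} \times f(a) \subseteq V$, and compactness gives $\pi_1(U) \subseteq [a + \delta, x^*]$ for some $\delta > 0$.

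The hard part, which I expect to be the heart of the proof, is to extract from this planar separation a genuine $y$-gap: for every $y_0 \in f(a)$, some $y$ strictly between $y^*$ and $y_0$ must fail to lie in $\pi_2(K)$. The naive approach of writing $\pi_2(K) = \pi_2(U) \cup \pi_2(V)$ and invoking that a connected interval cannot be covered by disjoint closed sets does not apply, because $\pi_2(U)$ and $\pi_2(V)$ can overlap in $[0, 1]$ even though $U$ and $V$ are disjoint in the plane. To obtain the gap I would refine the choice of $(x^*, y^*)$: since $C \cap \{x^*\} \times f(x^*)$ is a closed subset of the compact slice $\{x^*\} \times f(x^*)$, I can pick $y^*$ to be an extremal value of $\pi_2(C \cap \{x^*\} \times f(x^*))$. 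Combining this extremality with the upper semicontinuity of $f$ and the positive distance $\delta$ between $U$ and $\{a\} \times [0, 1]$, I would produce an $\eta > 0$ such that one of the half-open intervals $(y^*, y^* + \eta)$ or $(y^* - \eta, y^*)$ is disjoint from $\pi_2(K)$. This gap lies inside $[\min(y^*, y_0), \max(y^*, y_0)]$ for every $y_0 \in f(a)$ on the appropriate side of $y^*$, while Lemma~\ref{Existence} applied inside $K$ rules out all $y_0$ on the other side; either way WIVP fails at $(x^*, y^*, a)$, completing the contrapositive.
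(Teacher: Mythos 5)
Your $(2)\Rightarrow(1)$ direction is exactly the paper's argument and is fine. The gap is in $(1)\Rightarrow(2)$, and it is genuine: the Weak Intermediate Value Property need not fail at the triple $(x_1,y_1,x_2)=(x^*,y^*,a)$ that you target, and the ``$y$-gap'' you hope to extract --- an $\eta>0$ with $(y^*,y^*+\eta)$ or $(y^*-\eta,y^*)$ disjoint from $\pi_2(K)$ --- simply does not exist in general. Concretely, let $G(f)=\left(\{0\}\times[0,1]\right)\cup\left([0,1]\times\{0\}\right)\cup\left(\left[\tfrac14,1\right]\times\{\tfrac12\}\right)\cup\left(\{1\}\times[0,1]\right)$; this is a closed connected graph of a usc function. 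Taking $[a,b]=[0,\tfrac12]$, the component $C=\left[\tfrac14,\tfrac12\right]\times\{\tfrac12\}$ of $G(f)\cap V_{[0,1/2]}$ misses $\{0\}\times[0,1]$, so $(2)$ fails. Here $x^*=\tfrac14$, $y^*=\tfrac12$, and $K=G(f)\cap V_{[0,1/4]}$ has $\pi_2(K)=[0,1]$ because of the left edge $\{0\}\times f(0)$, so no one-sided interval at $y^*$ avoids $\pi_2(K)$; moreover $y^*\in f(0)$, so the choice $y_2=y^*$ witnesses the WIVP condition at $(x^*,y^*,0)$ (vacuously, or via $x=x^*$). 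The WIVP failure in this example occurs instead at $x_1=\tfrac14$, $y_1=\tfrac12$ with $x_2$ an \emph{interior} point slightly to the left of $x^*$, where $f(x_2)=\{0\}$ forces $y_2=0$ and the values $y\in(0,\tfrac12)$ are missed. Your extremality refinement and your appeal to Lemma~\ref{Existence} to ``rule out all $y_0$ on the other side'' do not repair this, since in the example every $y_0\in f(a)=[0,1]$ is available and one of them equals $y^*$.

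The paper's proof supplies precisely the ingredient you are missing: after separating $G(f)\cap V_{[a,b]}$ into disjoint closed sets $A$ and $B$ via the Cut-Wire Theorem (with the bad component in $A$ and the spanning continuum from Lemma~\ref{Existence} in $B$), it takes $x_1$ to be the extreme point of $\pi_1(A)$ and then tests WIVP with $x_2=x_1+\tfrac{\epsilon}{2}$ where $\epsilon\leq d(A,B)$ --- that is, $x_2$ sits just across the separation, not at the far endpoint $a$. This forces \emph{every} point of $\{x_2\}\times f(x_2)$ into $B$, hence at distance at least $\epsilon$ from the carefully chosen $(x_1,y_1)$, so every candidate $y_2$ satisfies $|y_2-y_1|>\tfrac{\epsilon}{2}$ and a value $y$ adjacent to $y_1$ lies in neither $A$ nor $B$. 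The localization of $x_2$ near $x_1$ is what converts the planar separation into a genuine failure of WIVP; with $x_2=a$ the graph over the long interval $[a,x^*]$ can fill in all the intermediate $y$-values, as the example shows.
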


\begin{proof}
$(1\Rightarrow 2)$ As in Lemma \ref{Existence}, let \[\mathcal{L} = \{H: \text{$H$ is a component of $G(f)\cap V_{[a,b]}$ and $H\cap \{a\}\times [0,1]\neq\emptyset$}\},\] \[\mathcal{R} = \{K: \text{$K$ is a component of $G(f)\cap V_{[a,b]}$ and $K\cap \{b\}\times [0,1]\neq\emptyset$}\}.\]
Note every component $D$ of $G(f)\cap V_{[a,b]}$ is a member of either $\mathcal{L}$ or $\mathcal{R}$; otherwise $D$ would be a proper component of $G(f)$, contradicting the assumption that $G(f)$ is connected. By contradiction, suppose there is some component $C$ of $G(f)\cap V_{[a,b]}$ that does not intersect both $\{a\}\times[0,1]$ and $\{b\}\times[0,1]$. Without loss of generality, suppose $C\cap (\{b\}\times[0,1])=\emptyset$. Note this implies $C\in\mathcal{L}$. Then $C$ and $f(b)$ are nonempty disjoint closed subsets of $G(f)\cap V_{[a,b]}$ such that no connected subset of $G(f)\cap V_{[a,b]}$ intersects both $C$ and $\{b\}\times f(b)$. Then by the Cut-Wire Theorem, there are disjoint closed sets $A$ and $B$ in $G(f)\cap V_{[a,b]}$ such that $A\cup B=G(f)\cap V_{[a,b]}$, $C\subseteq A$, and $\{b\}\times f(b)\subseteq B$. Note that by the connectedness of each $H\in\mathcal{L}$ and $K\in\mathcal{R}$, $H\subseteq A$ and $K\subseteq B$. Thus $A = \bigcup_{H\in\mathcal{L}\setminus(\mathcal{L}\cap\mathcal{R})}H$ and $B = \bigcup_{K\in\mathcal{R}}K$.

Let $x_{1} = \max\{x: (x,y)\in A\}$. By Lemma \ref{Existence}, there is some connected set $D\subseteq B$ such that $D\cap\{a\}\times [0,1] \neq\emptyset$ and $D\cap\{b\}\times[0,1]\neq\emptyset$. As $D$ is connected, there is some point $z\in f(x_{1})$ such that $(x_{1},z)\in D$. Note there is some $y\in f(x_{1})$ such that $(x_{1},y)\in A$ and either $y>z$ or $z>y$. Without loss of generality, suppose $z>y$. Let $y_{1}=\max\{y:(x_{1},y)\in A \text{ and } y<z\}$. Define $\epsilon = \min\{d(A,B),b-x_{1}\}>0$. Let $x_{2} = x_{1}+\frac{\epsilon}{2}$ and $y_{2}\in f(x_{2})$. By the construction of $x_{1}$ and $x_{2}$, $(x_{2},y_{2})\in B$. So $|y_{2}-y_{1}|>\frac{\epsilon}{2}$; otherwise $d((x_{1},y_{1}),(x_{2},y_{2}))<\epsilon$. Let $y\in (y_{1}-\frac{\epsilon}{2}, y_{1}+\frac{\epsilon}{2})$ be between $y_{1}$ and $y_{2}$ and $x$ between $x_{1}$ and $x_{2}$. By definition of $x_{1}$, $(x,y)\notin A$. But $d((x_{1},y_{1}),(x,y))<\epsilon$ so $(x,y)\notin B$. Thus for each $x$ between $x_{1}$ and $x_{2}$, $y\notin f(x)$, a contradiction.

$(2\Rightarrow 1)$ Choose $x_{1},x_{2}\in[0,1]$ and let $y_{1}\in f(x_{1})$. Suppose $x_{1}< x_{2}$. Let $C$ be a component of $G(f)\cap V_{[x_{1},x_{2}]}$ containing $(x_{1},y_{1})$. Choose $y_{2}$ such that $(x_{2},y_{2})\in C$. As $C$ is connected, $\pi_{2}(C)$ is connected where $\pi_{2}:[0,1]^{2}\rightarrow [0,1]$ is the projection map given by $\pi_{2}(x,y)=y$. Thus $\pi_{2}(C)$ is an interval containing $y_{1}$ and $y_{2}$. So for any $y$ between $y_{1}$ and $y_{2}$, there is some $x\in[x_{1},x_{2}]$ such that $(x,y)\in C$, i.e. $y\in f(x)$. The case where $x_{1}>x_{2}$ follows by a similar argument.

\end{proof}

\begin{corollary}\label{PointImage}
Lit $f:[0,1]\rightarrow2^{[0,1]}$ be usc function such that $G(f)$ is connected. If $f(x)$ is connected for every $x\in[0,1]$, then $f$ has the Weak Intermediate Value Property.
\end{corollary}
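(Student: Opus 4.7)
The plan is to reduce the claim to Theorem~\ref{Intersection}: since $G(f)$ is assumed connected, it suffices to verify condition $(2)$ of that theorem, namely that for every $0\leq a\leq b\leq 1$ each component of $G(f)\cap V_{[a,b]}$ intersects both $\{a\}\times[0,1]$ and $\{b\}\times[0,1]$. Fixing such $a\leq b$ and a component $C$, I would argue by contradiction that $C$ misses, say, $\{b\}\times[0,1]$; the other endpoint is symmetric.

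Since $G(f)\cap V_{[a,b]}$ is compact, $C$ is compact, so $\pi_{1}(C)=[\alpha,\beta]$ is a closed subinterval of $[a,b]$, and the assumption $C\cap(\{b\}\times[0,1])=\emptyset$ forces $\beta<b$. The central step — and the only place where the hypothesis that each $f(x)$ is connected is used — is a fiber-absorption argument. Picking any $(\beta,y_{0})\in C$, the set $\{\beta\}\times f(\beta)$ is connected by hypothesis, lies inside $G(f)\cap V_{[a,b]}$, and meets $C$ at $(\beta,y_{0})$, so by maximality of the component $C$ the entire fiber satisfies $\{\beta\}\times f(\beta)\subseteq C$.

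I would then apply Lemma~\ref{Existence} to the strip $V_{[\beta,b]}$ to obtain a subcontinuum $D\subseteq G(f)\cap V_{[\beta,b]}$ meeting both $\{\beta\}\times[0,1]$ and $\{b\}\times[0,1]$. Any point of $D$ with first coordinate $\beta$ is of the form $(\beta,y_{1})$ with $y_{1}\in f(\beta)$, hence lies in $C$ by the previous step. Thus $C\cup D$ is a connected subset of $G(f)\cap V_{[a,b]}$ strictly containing $C$ (since $D$ reaches $\{b\}\times[0,1]$ while $C$ does not), contradicting the maximality of $C$. No serious obstacle is anticipated: the whole argument hinges on the fiber-absorption observation that connectedness of each $f(x)$, combined with maximality of components, forces $C$ to contain the entire vertical fiber above every $x$-coordinate in $\pi_{1}(C)$, after which Lemma~\ref{Existence} supplies the bridge extending $C$ past $\beta$.
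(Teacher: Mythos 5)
Your argument is correct, and it takes a somewhat different route from the paper's. The paper applies Lemma~\ref{Existence} once to the full strip $V_{[a,b]}$ to produce a single spanning component $C$, uses connectedness of the two boundary fibers $f(a)$ and $f(b)$ to show $C$ absorbs all of $\{a\}\times f(a)$ and $\{b\}\times f(b)$, and then notes that every other component $D$ must meet one of the two walls (else it would be a proper component of $G(f)$), hence meets $C$ and equals it; so $G(f)\cap V_{[a,b]}$ in fact has only one component, which spans. You instead run a local ``frontier'' argument: assuming a component $C$ misses $\{b\}\times[0,1]$, you use compactness to locate its rightmost abscissa $\beta<b$, invoke connectedness of the single fiber $f(\beta)$ to absorb $\{\beta\}\times f(\beta)$ into $C$, and then apply Lemma~\ref{Existence} on the subinterval $[\beta,b]$ to bridge $C$ past $\beta$, contradicting maximality. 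Both proofs rest on the same two ingredients (Lemma~\ref{Existence} and Theorem~\ref{Intersection}); yours uses the fiber-connectedness hypothesis only at one point per component and avoids the separate ``every component touches a wall'' step, while the paper's version yields the slightly stronger byproduct that $G(f)\cap V_{[a,b]}$ is connected. One small point worth making explicit in a write-up: $\pi_{1}(C)$ is a closed interval because $C$, being a component of a compact set, is compact and connected; and the degenerate case $a=b$ is trivial.
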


\begin{proof}
Let $0\leq a\leq b\leq1$. By Lemma \ref{Existence}, there is some component $C$ of $G(f)\cap V_{[a,b]}$ that intersects $\{a\}\times[0,1]$ and $\{b\}\times[0,1]$. Since $f(a)$ and $f(b)$ are connected, $C\cap\left(\{a\}\times[0,1]\right)=\{a\}\times f(a)$ and $C\cap\left(\{b\}\times[0,1]\right)=\{b\}\times f(b)$. Let $D$ be a component of $G(f)\cap V_{[a,b]}$. Since $G(f)$ is connected, either $D\cap\left(\{a\}\times[0,1]\right)\neq\emptyset$ or $D\cap\left(\{b\}\times[0,1]\right)\neq\emptyset$. In either case, $C\cap D\neq\emptyset$. Therefore $D=C$. Thus every component of $G(f)\cap V_{[a,b]}$ intersects $\{a\}\times[0,1]$ and $\{b\}\times[0,1]$, and $f$ has the Weak Intermediate Value Property by Theorem \ref{Intersection}.
\end{proof}

\begin{theorem}\label{LR}
Let $f:[0,1]\rightarrow 2^{[0,1]}$ be a function that is upper semicontinuous and has the Weak Intermediate Value Property, and  suppose $G(f)$ be connected. Then $G(f)$ contains no $L$-sets or $R$-sets.
\end{theorem}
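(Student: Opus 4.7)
The plan is to argue by contradiction; I treat the $L$-set case in detail and note that the $R$-set case follows by the mirror-symmetric argument (reflecting across the vertical line $x=b_{i}$, i.e., using a strip to the left of $a_{i}$ in place of one to the right of $b_{i}$). Suppose $G(f)\sqsubset_{C}L_{i}$, so there is $\epsilon>0$, a component $C'$ of $G(f)\cap Z_{i}(\epsilon)$ with $C'\subset L_{i}$, and $C$ a component of $C'\cap Z_{i}$. I would fix a point $p=(x_{0},y_{0})\in C$; then $x_{0}\in[a_{i},b_{i}]$ and $y_{0}\in[a_{i-1},b_{i-1}]$. The $L$-set condition requires $A_{i}\cap\{0,1\}=\emptyset$, hence $b_{i}<1$, so I may choose $\delta\in(0,\epsilon)$ with $b_{i}+\delta\leq 1$.

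Next, let $E$ be the component of $G(f)\cap V_{[x_{0},b_{i}+\delta]}$ containing $p$. By Theorem \ref{Intersection} applied to the Weak Intermediate Value Property, $E$ meets the right edge $\{b_{i}+\delta\}\times[0,1]$, so there is a point $q=(b_{i}+\delta,y_{R})\in E$. Note that $q\notin L_{i}$, since $b_{i}+\delta>b_{i}$ places $q$ outside both $J_{i}\times[0,1]$ and $Z_{i}$.

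The key claim is $E\subset Z_{i}(\epsilon)$. Once this is established, $E$ is a connected subset of $G(f)\cap Z_{i}(\epsilon)$ meeting $C'$ at $p$, and maximality of the component $C'$ forces $E\subset C'\subset L_{i}$; but this contradicts $q\in E\setminus L_{i}$, completing the proof.

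The main obstacle is verifying the containment $E\subset Z_{i}(\epsilon)$, because the vertical strip $V_{[x_{0},b_{i}+\delta]}$ does not restrict the $y$-coordinate of $E$: a priori $E$ could escape through the top ($y=b_{i-1}+\epsilon$) or bottom ($y=a_{i-1}-\epsilon$) of $Z_{i}(\epsilon)$. My plan to close this gap is a compactness and Cut-Wire argument inside the compact set $G(f)\cap\overline{Z_{i}(\epsilon/2)}$: the component of $p$ in this set is contained in $C'$, hence in $L_{i}\cap\{x\leq b_{i}\}$, and is therefore disjoint from the closed set $G(f)\cap\overline{Z_{i}(\epsilon/2)}\cap V_{[b_{i}+\delta/2,\,b_{i}+\epsilon/2]}$. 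The Cut-Wire Theorem separates these by disjoint closed sets at positive distance, providing a uniform barrier that traps any continuum through $p$ in a neighborhood of $C'$ once $\delta$ is made small relative to that distance. Making this separation precise, and tying it to $E$ via a second application of Theorem \ref{Intersection} to control the $y$-excursion of $E$, is the technical heart of the proof.
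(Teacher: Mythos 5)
Your overall strategy is the same as the paper's: assume an $L$-set framed by $A_{i}\times A_{i-1}$ exists, pass to the component of the graph in a vertical strip that extends slightly to the right of $b_{i}$, invoke Theorem~\ref{Intersection} to force that component to reach the right edge of the strip, and contradict the fact that everything connected to $p$ near $Z_{i}$ is trapped in $L_{i}$. The setup (choosing $\delta<\epsilon$ with $b_{i}+\delta\le 1$, which is legitimate because the definition of an $L$-set requires $A_{i}\cap\{0,1\}=\emptyset$, and observing $q\notin L_{i}$) is correct. However, the proof is not complete: the entire argument hinges on the containment $E\subset Z_{i}(\epsilon)$, which you explicitly defer, and the plan you sketch for it does not close the gap. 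The Cut-Wire separation you construct lives only inside the compact set $G(f)\cap\overline{Z_{i}(\epsilon/2)}$; it yields disjoint closed sets $X_{A}\supset Q$ and $X_{B}$ at positive distance \emph{within that set}, but it says nothing about the portion of $E$ that leaves $\overline{Z_{i}(\epsilon/2)}$ through its top or bottom, which is exactly the escape route you are worried about. Writing $E\cap\overline{Z_{i}(\epsilon/2)}=(E\cap X_{A})\cup(E\cap X_{B})$ does not disconnect $E$, since $E$ may have points outside $\overline{Z_{i}(\epsilon/2)}$ joining the two pieces. Nor can ``a second application of Theorem~\ref{Intersection}'' control the $y$-excursion: that theorem only asserts that components of $G(f)\cap V_{[a,b]}$ meet the two vertical edges; it gives no information about how far such a component travels vertically.

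The missing step can be supplied by boundary bumping in the continuum $E$ rather than by a barrier argument. Let $U=E\cap Z_{i}(\epsilon)$, which is open in $E$, and let $P$ be the component of $U$ containing $p$. Then $P$ is a connected subset of $G(f)\cap Z_{i}(\epsilon)$ meeting $C'$, so $P\subset C'\subset L_{i}$; intersecting with $V_{[x_{0},b_{i}+\delta]}$ and using $L_{i}\cap V_{[a_{i},1]}=Z_{i}$ gives $P\subset Z_{i}$, whence $\overline{P}\subset Z_{i}\subset Z_{i}(\epsilon)$ and $\overline{P}\cap(E\setminus U)=\emptyset$. If $U\neq E$, the boundary bumping lemma applied to the continuum $E$ and the proper open subset $U$ forces $\overline{P}\cap(E\setminus U)\neq\emptyset$, a contradiction; hence $E=U\subset Z_{i}(\epsilon)$, and your argument then finishes as intended. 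For comparison, the paper compresses this same issue into the single assertion that $C$ is a component of $G(f)\cap V_{[a,b+\epsilon]}$ before applying Theorem~\ref{Intersection}; so you have correctly located where the real work is, but as written your proposal asserts a plan for that step rather than a proof, and the plan as stated would not succeed without being replaced by an argument of the above type.
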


\begin{proof}
We show that $G(f)$ contains no $L$-sets. That there are no $R$-sets follows by a symmetric argument. By way of contradiction, suppose there are some $A_{1} = [a,b]$ and $A_{0} = [c,d]$ such that $A_{1}\times A_{0}$ frames an $L$-set of $G(f)$. Let $\epsilon>0$ and $C'\subset L$ be a component of $G(f)\cap Z(\epsilon)$ that contains an $L$-set $C$. Then $C'$ and $C$ are connected sets that do not intersect any of $[a,b]\times [0,c)$, $[a,b]\times (d,1]$, or $(b,1]\times [0,1]$. Thus $C$ is a component of $V_{[a,b+\epsilon]}$. However by Theorem \ref{Intersection}, $C\cap \left(\{b+\epsilon \}\times[0,1]\right)\neq\emptyset$, a contradiction.
\end{proof}

\begin{theorem} \label{InvLim}
For each $n\in\mathbb{N}$, let $f_{n}:[0,1]\rightarrow 2^{[0,1]}$ be a surjective, upper-semicontinuous function with the Weak Intermediate Value Property and a connected graph $G(f_{n})$. Then $\invlim\{[0,1],f_{n}\}$ is connected.
\end{theorem}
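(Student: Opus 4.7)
The plan is to reduce the theorem to the Greenwood--Kennedy criterion (Theorem \ref{CC}) and then invoke Theorem \ref{LR} to rule out the only obstruction. Since each $f_n$ is a surjective usc function with connected graph defined on intervals, the hypotheses of Theorem \ref{CC} are met, so it suffices to prove the contrapositive: the system $\{f_i : i > 0\}$ does not admit a CC-sequence.

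I will argue by contradiction. Suppose the collection admits a CC-sequence $\{A_i : m \leq i \leq n\}$ over some $[m,n]$ with pivot point $\langle p_k\rangle_{k \in \omega}$. Inspecting Definition \ref{C-Sequence}, condition (1) is mandatory regardless of the length of the CC-sequence: it forces $G_{m+1} \sqsubset_{C_{m+1}} L_{m+1}$ or $G_{m+1} \sqsubset_{C_{m+1}} R_{m+1}$. In either case $G(f_{m+1})$ must contain an $L$-set or an $R$-set framed by $A_{m+1} \times A_m$.

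However, $f_{m+1}$ is usc, has connected graph, and has the Weak Intermediate Value Property, so Theorem \ref{LR} applies directly: $G(f_{m+1})$ contains neither $L$-sets nor $R$-sets. This contradicts the existence of the CC-sequence, so by Theorem \ref{CC} the inverse limit $\invlim\{[0,1], f_n\}$ must be connected.

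There is no real obstacle here, as all the technical work has been absorbed into Theorem \ref{LR} (where the Weak IVP was translated via Theorem \ref{Intersection} into a statement about components of vertical strips) and into the Greenwood--Kennedy characterization. The only subtlety worth mentioning in the write-up is that condition (1) of the CC-sequence definition is required for every CC-sequence, so ruling out $L$-sets and $R$-sets in a single bonding graph is enough to block every possible CC-sequence regardless of the choice of indices $m, n$.
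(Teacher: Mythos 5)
Your proposal is correct and follows exactly the paper's argument: apply Theorem \ref{LR} to rule out $L$-sets and $R$-sets in each $G(f_{n})$, observe that condition (1) of Definition \ref{C-Sequence} therefore cannot be met, and conclude via Theorem \ref{CC}. Your added remark that condition (1) is mandatory for every CC-sequence regardless of $m$ and $n$ is a fair point of care, but the route is the same as the paper's.
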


\begin{proof}
By Theorem \ref{LR}, $G(f)$ contains no $L$-sets or $R$-sets. Then condition (1) of the definition of a CC-sequence cannot be met. It follows that the system does not admit a CC-sequence, and therefore $\invlim\{[0,1],f\}$ is connected by Theorem \ref{CC}.
\end{proof}

\begin{remark}
Jonathan Medaugh has informed the author that Theorem \ref{InvLim} would hold if $f_{n}^{-1}$, rather than $f_{n}$, were assumed to have the Weak Intermediate Value Property for each $n$. One approach to the proof of such a result would involve finite Mahavier products $G_{n}(f_{1},f_{2},\dots,f_{n})$ and the fact that $G_{n}(f_{1},f_{2},\dots, f_{n})$ and $G_{n}(f_{n}^{-1},f_{n-1}^{-1},\dots, f_{1}^{-1})$ are homeomorphic \cite[Theorem 2.11]{CharatonikRoe}.
\end{remark}

Before examining the structure of usc functions with the Weak Intermediate Property as unions of their subgraphs, we must introduce the notion of convergence in the hyperspace $2^{X}$ with the Hausdorff topology, i.e. what it means for a sequence of subsets of a metric space to converge. The following definition and three theorems can be found in \cite{Macias}.

\begin{definition}
\rm{Let $X$ be a space and $\{A_{i}\}_{i\in\omega}$ be a sequence of subsets of $X$. We define $\overline{\lim}\ A_{i}$ and $\underline{\lim}\ A_{i}$ as follows:
\[
\overline{\lim}\ A_{i}=\{x\in X: \text{for every open set $U\ni x$, $U\cap A_{i}\neq\emptyset$ for infinitely many  $i$}\}, 
\]\[
\underline{\lim}\ A_{i}=\{x\in X: \text{for every open set $U\ni x$, $U\cap A_{i}\neq\emptyset$ for cofinitely many  $i$}\}. 
\]
If $\overline{\lim}\ A_{i}=\underline{\lim}\  A_{i}$, we define $\lim \ A_{i}=\overline{\lim}\ A_{i}=\underline{\lim}\ A_{i}$.}
\end{definition}

\begin{theorem}\label{Hyperspace}
Let $X$ be a compact metric space. Then $2^{X}$ is compact.
\end{theorem}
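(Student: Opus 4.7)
The plan is to establish compactness of the metric space $(2^X, d_H)$, where $d_H$ denotes the Hausdorff metric induced by the metric on $X$, by showing $(2^X, d_H)$ is both totally bounded and complete; for a metric space these two conditions together are equivalent to compactness. The topology generated by $d_H$ coincides with the one implicit in the $\overline{\lim}/\underline{\lim}$ definitions just introduced, so proving compactness in this metric sense suffices.

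For total boundedness, fix $\epsilon > 0$. Since $X$ is compact, choose a finite $\epsilon$-net $F = \{x_1, \dots, x_N\} \subseteq X$, and let $\mathcal{N}$ denote the finite collection of nonempty subsets of $F$. For each $A \in 2^X$, put $S(A) = \{x_i \in F : d(x_i, A) < \epsilon\}$, which is nonempty because any $a \in A$ lies within $\epsilon$ of some $x_i$. A short two-sided check shows $d_H(A, S(A)) \le \epsilon$, so $\mathcal{N}$ is a finite $\epsilon$-net in $2^X$.

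For completeness, take a $d_H$-Cauchy sequence $(A_n)$ in $2^X$ and set
\[
A \;=\; \overline{\lim}\, A_n \;=\; \bigcap_{n \ge 1} \overline{\bigcup_{k \ge n} A_k}.
\]
Then $A$ is closed in the compact space $X$, hence compact. I would first show $A$ is nonempty by passing to a subsequence $(A_{n_k})$ with $d_H(A_{n_k}, A_{n_{k+1}}) < 2^{-k}$, picking any $a_{n_1} \in A_{n_1}$, and inductively choosing $a_{n_{k+1}} \in A_{n_{k+1}}$ with $d(a_{n_k}, a_{n_{k+1}}) < 2^{-k}$; the resulting Cauchy sequence in $X$ converges (by compactness of $X$), and the limit lies in $A$. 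Then I would establish $d_H(A_n, A) \to 0$ in two directions: given $\epsilon > 0$ and $n$ past the Cauchy threshold, each $a \in A_n$ can be extended along the tail by the same geometric construction to produce a point of $A$ within $\epsilon$ of $a$, while conversely each $x \in A$ is approximated by points drawn from arbitrarily late $A_k$, and the Cauchy estimate then gives $d(x, A_n) \le \epsilon$. Combining total boundedness with completeness yields compactness of $2^X$.

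The main obstacle is the completeness argument, particularly verifying that the candidate limit $A = \overline{\lim}\, A_n$ is nonempty and is indeed the Hausdorff limit of $(A_n)$. The delicate point is using compactness of $X$ to extract convergent approximating sequences from successive $A_n$ and then matching these extractions against the Cauchy estimate in both directions simultaneously; total boundedness is, by contrast, a direct consequence of a standard $\epsilon$-net construction.
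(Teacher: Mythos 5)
This theorem is stated in the paper as a classical fact imported from Mac\'{i}as's \emph{Topics on Continua}; the paper supplies no proof of its own, so there is nothing to compare against directly. Your argument --- total boundedness via an $\epsilon$-net of finite subsets of a finite $\epsilon$-net of $X$, plus completeness with the candidate limit $A=\overline{\lim}\,A_{n}=\bigcap_{n}\overline{\bigcup_{k\geq n}A_{k}}$ --- is the standard proof and is correct in every step you outline: the two-sided check that $d_{H}(A,S(A))\leq\epsilon$ works, the telescoping $2^{-k}$ chain shows $A\neq\emptyset$ and also furnishes the ``$A_{n}$ is within $\epsilon$ of $A$'' direction, and the reverse direction follows from the Cauchy estimate exactly as you describe. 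The one step you assert without argument, that the Hausdorff-metric topology agrees with the convergence notion underlying $\overline{\lim}/\underline{\lim}$, is true for compact $X$ and is the link that makes your metric proof serve the paper's purposes (e.g.\ Theorem \ref{liminf}); in a fully written version you would want to either prove that equivalence or phrase the completeness argument entirely in terms of $d_{H}$, which your sketch essentially already does. No gaps of substance; only routine details remain.
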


\begin{theorem}
Let $X$ be a compact metric space. Then $C(X)$ is compact.
\end{theorem}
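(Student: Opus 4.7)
The plan is to show that $C(X)$ is a closed subspace of $2^{X}$, whence compactness will follow immediately from Theorem \ref{Hyperspace} since closed subspaces of compact spaces are compact. To this end, I would take a convergent sequence $\{A_{i}\}_{i\in\omega}\subset C(X)$ with $\lim A_{i}=A\in 2^{X}$ and show $A$ must lie in $C(X)$. Membership in $2^{X}$ already guarantees that $A$ is nonempty and compact, so the only content to verify is that $A$ is connected.

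To prove connectedness of $A$ I would argue by contradiction. Suppose $A=B\cup C$ with $B,C$ disjoint, nonempty, and closed in $A$ (hence closed in $X$). Since $X$ is a compact metric space it is normal, so there exist disjoint open sets $U,V\subseteq X$ with $B\subseteq U$ and $C\subseteq V$. Picking any $b\in B$ and $c\in C$, the assumption $A=\underline{\lim}\ A_{i}$ applied to the open neighborhoods $U\ni b$ and $V\ni c$ shows that for all sufficiently large $i$, both $A_{i}\cap U$ and $A_{i}\cap V$ are nonempty.

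The more delicate step is to show that $A_{i}\subseteq U\cup V$ for cofinitely many $i$. Suppose instead that one can extract a subsequence $\{x_{i_{k}}\}$ with $x_{i_{k}}\in A_{i_{k}}\setminus(U\cup V)$. By compactness of $X$ this subsequence has an accumulation point $x$, which must lie in the closed set $X\setminus(U\cup V)$. However, by the very definition of $\overline{\lim}$, such an accumulation point satisfies $x\in\overline{\lim}\ A_{i}=A\subseteq U\cup V$, a contradiction. Combining this with the previous paragraph, for all sufficiently large $i$ the pair $(U,V)$ separates $A_{i}$, contradicting that $A_{i}\in C(X)$ is connected. Hence $A$ is connected, $A\in C(X)$, and $C(X)$ is closed in $2^{X}$.

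I expect the main obstacle to be propagating the inclusion $A\subseteq U\cup V$ back to the approximating sets $A_{i}$; this is where one genuinely uses compactness of $X$ together with the $\overline{\lim}$ half of Hausdorff convergence, rather than the weaker $\underline{\lim}$ half that suffices for the point selections. The remainder is a standard normality-and-separation argument once Theorem \ref{Hyperspace} is in hand.
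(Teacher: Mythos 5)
Your argument is correct: showing $C(X)$ is closed in $2^{X}$ via the normality/separation argument and the $\overline{\lim}$ compactness step, then invoking Theorem \ref{Hyperspace}, is the standard proof of this fact. Note that the paper does not prove this theorem at all --- it is quoted from Mac\'{i}as \cite{Macias} --- so there is no in-paper proof to compare against, but your reasoning is sound and matches the usual textbook treatment.
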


\begin{theorem}\label{liminf}
If $X$ is a compact metric space and if $\{E_{i}\}_{i\in\omega}$ is a sequence of connected subsets of $X$ such that $\underline{\lim}\ E_{i}\neq\emptyset$, then $\overline{\lim}\ E_{i}$  is connected.
\end{theorem}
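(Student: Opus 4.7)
The plan is to argue by contradiction. Suppose $\overline{\lim}\ E_{i}$ is disconnected and write $\overline{\lim}\ E_{i}=A\cup B$, where $A$ and $B$ are disjoint nonempty sets that are closed in $\overline{\lim}\ E_{i}$. A standard property of $\overline{\lim}$ is that it is itself closed in $X$: if $x\notin\overline{\lim}\ E_{i}$, some open $U\ni x$ meets only finitely many $E_{i}$, and then $U$ lies entirely in the complement. Hence $A$ and $B$ are closed, and therefore compact, subsets of $X$. Since they are disjoint, $\delta:=d(A,B)>0$, so the open $\tfrac{\delta}{3}$-neighborhoods $U_{A}$ of $A$ and $U_{B}$ of $B$ are disjoint.

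The crucial step is to show that $E_{i}\subseteq U_{A}\cup U_{B}$ for cofinitely many $i$. Let $K=X\setminus(U_{A}\cup U_{B})$, which is closed in the compact space $X$, hence compact. If infinitely many $E_{i}$ met $K$, choose $x_{i}\in E_{i}\cap K$ for each such $i$ and extract a convergent subsequence $x_{i_{k}}\to x\in K$. By definition of $\overline{\lim}$, every open neighborhood of $x$ would meet infinitely many $E_{i}$, so $x\in\overline{\lim}\ E_{i}=A\cup B$; but $K$ is disjoint from $A\cup B$, a contradiction.

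For cofinitely many $i$ we then have $E_{i}\subseteq U_{A}\cup U_{B}$, and connectedness of $E_{i}$ together with the disjointness of $U_{A}$ and $U_{B}$ forces $E_{i}\subseteq U_{A}$ or $E_{i}\subseteq U_{B}$. Now I invoke the hypothesis $\underline{\lim}\ E_{i}\neq\emptyset$ to break the tie: pick $p\in\underline{\lim}\ E_{i}\subseteq\overline{\lim}\ E_{i}=A\cup B$, and assume without loss of generality that $p\in A\subseteq U_{A}$. By the definition of $\underline{\lim}$, the open set $U_{A}$ meets $E_{i}$ for cofinitely many $i$, so combined with the dichotomy above, $E_{i}\subseteq U_{A}$ for cofinitely many $i$. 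On the other hand, $B$ is nonempty and $B\subseteq\overline{\lim}\ E_{i}$, so fixing any $q\in B\subseteq U_{B}$, the open neighborhood $U_{B}$ of $q$ must meet infinitely many $E_{i}$; this contradicts $U_{A}\cap U_{B}=\emptyset$ together with $E_{i}\subseteq U_{A}$ eventually.

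I expect the main obstacle to be the pivotal compactness step showing that $E_{i}$ eventually enters $U_{A}\cup U_{B}$: without compactness of $X$, the points $x_{i}\in E_{i}\cap K$ need not accumulate, and a "leak" out of the neighborhoods could persist. Everything else is a formal juggling of the definitions of $\overline{\lim}$ and $\underline{\lim}$ together with the connectedness of each $E_{i}$, and the role of $\underline{\lim}\ E_{i}\neq\emptyset$ is precisely to prevent the $E_{i}$ from alternating between $U_{A}$ and $U_{B}$ as $i$ varies.
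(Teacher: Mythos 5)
Your proof is correct and complete; the paper does not prove this statement itself but quotes it from Mac\'{i}as's \emph{Topics on Continua}, and your argument --- separating $\overline{\lim}\ E_{i}$ into disjoint compact pieces, using compactness of $X$ to show the $E_{i}$ eventually lie in the disjoint neighborhoods $U_{A}\cup U_{B}$, invoking connectedness to place each $E_{i}$ wholly in one of them, and using $\underline{\lim}\ E_{i}\neq\emptyset$ to force a contradiction --- is precisely the standard proof found there (and in Nadler's \emph{Continuum Theory}). No gaps.
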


\begin{theorem}\label{Union}
Let $f:[0,1]\rightarrow 2^{[0,1]}$ be an upper semicontinuous function such that $G(f)$ is connected. If $f$ has the Weak Intermediate Value Property, then there is a collection $\mathcal{F}$ of functions $g:[0,1]\rightarrow C([0,1])$ that satisfies the following:
\begin{enumerate}
\item Each $g\in\mathcal{F}$ is upper semicontinuous and has the Weak Intermediate Value Property.
\item $G(f) = \bigcup_{g\in\mathcal{F}}G(g)$.
\end{enumerate}
\end{theorem}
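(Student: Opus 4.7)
The plan is, for each $(x_{0},y_{0})\in G(f)$, to construct a usc function $g_{(x_{0},y_{0})}\colon [0,1]\to C([0,1])$ satisfying $G(g_{(x_{0},y_{0})})\subseteq G(f)$ and $y_{0}\in g_{(x_{0},y_{0})}(x_{0})$, and then to take $\mathcal{F}=\{g_{(x_{0},y_{0})}:(x_{0},y_{0})\in G(f)\}$. A short separation argument shows that a usc function on $[0,1]$ with connected values must have a connected graph (if $A\cup B=G(g)$ is a separation then some fiber $\{x\}\times g(x)$ meets both $A$ and $B$, contradicting connectedness of $g(x)$), so condition~(1) follows immediately from Corollary~\ref{PointImage}, and condition~(2) is built into the definition of $\mathcal{F}$.

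I would build each $g=g_{(x_{0},y_{0})}$ as a Hausdorff limit of explicit approximations. For each $n\in\mathbb{N}$, fix a partition $0=t_{0}^{n}<\cdots<t_{m_{n}}^{n}=1$ of mesh at most $1/n$ with $x_{0}=t_{k_{0}}^{n}$ for some $k_{0}$. Starting from $(x_{0},y_{0})$ and repeatedly applying Theorem~\ref{Intersection}, chain together cross-strip components of $G(f)$: let $C_{k_{0}+1}^{n}$ be the component of $G(f)\cap V_{[t_{k_{0}}^{n},t_{k_{0}+1}^{n}]}$ containing $(x_{0},y_{0})$; having chosen $C_{k}^{n}$, pick a point $p_{k}^{n}\in C_{k}^{n}\cap(\{t_{k}^{n}\}\times[0,1])$ (nonempty by Theorem~\ref{Intersection}) and let $C_{k+1}^{n}$ be the component of $G(f)\cap V_{[t_{k}^{n},t_{k+1}^{n}]}$ containing $p_{k}^{n}$; proceed symmetrically to the left. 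Then $K_{n}=\bigcup_{k}C_{k}^{n}$ is a subcontinuum of $G(f)$ containing $(x_{0},y_{0})$ that projects onto $[0,1]$.

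To turn $K_{n}$ into the graph of a connected-valued function, I would define $g_{n}\colon[0,1]\to C([0,1])$ by letting $g_{n}(x)$ be the component of $f(x)$ meeting the $x$-section of $C_{k}^{n}$ (chosen by a canonical rule such as \emph{the uppermost component meeting the section}, in order to preserve upper semicontinuity), together with a small enlargement at each $t_{k}^{n}$ to keep $G(g_{n})$ connected. Compactness of $2^{[0,1]^{2}}$ (Theorem~\ref{Hyperspace}) then produces a subsequence $G(g_{n_{j}})$ converging in Hausdorff metric to some $K$; since $(x_{0},y_{0})\in G(g_{n_{j}})$ for every $j$, Theorem~\ref{liminf} makes $K$ a continuum, and $K\subseteq G(f)$ by closedness of $G(f)$. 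Setting $g(x):=\{y:(x,y)\in K\}$ then yields a usc function with $(x_{0},y_{0})\in G(g)\subseteq G(f)$.

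The main obstacle is proving that each fiber $g(x)=K_{x}$ is connected. My plan is an argument by contradiction via Theorem~\ref{Intersection}: if some $K_{x^{*}}$ admitted a nontrivial gap $(a,b)$ with $K_{x^{*}}$ meeting both $[0,a]$ and $[b,1]$, then Hausdorff convergence would force the interval-valued $g_{n_{j}}(x^{*})$ to lie entirely on one side of the gap for large $j$, while the chain construction in strips adjacent to $x^{*}$ would place approximating points on the opposite side. Tracing this through the cross-strip structure should produce a component of $G(f)\cap V_{[a',b']}$ over some narrow strip around $x^{*}$ that fails to meet one of its vertical edges, contradicting the Weak Intermediate Value Property via Theorem~\ref{Intersection} and ruling out the gap.
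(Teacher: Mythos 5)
Your overall architecture matches the paper's: for each $(x_{0},y_{0})\in G(f)$ you chain together strip components of $G(f)$ using Theorem~\ref{Intersection}, pass to a Hausdorff limit via compactness of $2^{[0,1]^{2}}$ and Theorem~\ref{liminf}, and finish with Corollary~\ref{PointImage}. However, the step in which you replace the chained continuum $K_{n}$ by an interval-valued $g_{n}$ is both unnecessary and, as described, unworkable. The rule ``take the uppermost component of $f(x)$ meeting the $x$-section of $C_{k}^{n}$, plus a small enlargement at each $t_{k}^{n}$'' cannot in general be carried out inside $G(f)$: if $G(f)$ is the union of the lines $y=x$ and $y=1-x$ (which is connected, surjective, and has the Weak Intermediate Value Property), the chaining may hand you the upper branch on one side of a partition point $t_{k}^{n}$ and the lower branch on the other, and the only ``enlargement'' that reconnects the graph at $t_{k}^{n}$ is a vertical segment not contained in $G(f)$, since $f(t_{k}^{n})$ is a two-point set. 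The same example shows the uppermost-selection need not be upper semicontinuous. The paper simply takes $G(g_{n})=\bigcup_{j}C_{n,j}$, accepting that the approximants have disconnected fibers, and proves connectedness only for the fibers of the limit. Also, your parenthetical justification that connected values force a connected graph is stated incorrectly: in a separation $A\cup B$ of $G(g)$ each fiber lies \emph{entirely} in $A$ or in $B$ (it cannot meet both, being connected); the correct argument separates the domain $[0,1]$ into the two closed sets of $x$ whose fibers lie in $A$, respectively $B$.

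The more serious gap is that the fiber-connectedness of the limit --- which you correctly identify as the main obstacle --- is left as a sketch that leans on the interval-valuedness of $g_{n_{j}}(x^{*})$ (unavailable, per the above) and aims at contradicting the Weak Intermediate Value Property of $f$ itself, which does not obviously follow: a gap in the fiber of the limit set $K$ at $x^{*}$ does not produce a strip component of $G(f)$ missing a vertical edge, since $G(f)$ may be connected across the gap through points that are simply not in $K$. The argument that actually closes this, and which your construction already supports, is the one in the paper: given $(x,y),(x,y')$ in the limit, approximate them by points of $G(g_{n_{k}})$ lying in the union $A_{n_{k}}$ of the one or two dyadic strips adjacent to $x$; by the chaining, $A_{n_{k}}$ is a subcontinuum of $G(g_{n_{k}})$, so $\underline{\lim}\,A_{n_{k}}\neq\emptyset$ and Theorem~\ref{liminf} makes $\overline{\lim}\,A_{n_{k}}$ a connected subset of $G(g)$ containing both points; since the strips shrink to $\{x\}$, this connected set sits inside $\{x\}\times f(x)$, so $g(x)$ is connected. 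In short: drop the interval-valuation of the approximants, and replace the contradiction sketch with the $\overline{\lim}$ argument on the restricted subcontinua; as written, the proposal does not yet constitute a proof.
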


\begin{proof}
It suffices to show that for each $(x,y)\in G(f)$, there is an upper semicontinuous function $g:[0,1]\rightarrow C([0,1])$ such that $(x,y)\in G(g)\subseteq G(f)$. To that end, suppose $(x,y)\in G(f)$. For each $n\in\mathbb{N}$, define $\mathcal{G}_{n}$ to be the collection of set-valued functions $g_{n}$ that satisfy the following: let $0\leq i< 2^{n}$ such that $\frac{i}{2^{n}}\leq x\leq \frac{i+1}{2^{n}}$ and $C_{n,i}$ be the component of $G(f)\cap V_{\left[\frac{i}{2^{n}},\frac{i+1}{2^{n}}\right]}$ containing $(x,y)$. By Theorem \ref{Intersection}, $C_{n,i}\cap \left(\left\{\frac{i+1}{2^{n}}\right\}\times[0,1]\right)\neq\emptyset$. Choose a component $C_{n,i+1}$ of $G(f)\cap V_{\left[\frac{i+1}{2^{n}},\frac{i+2}{2^{n}}\right]}$ such that $C_{n,i}\cap C_{n,i+1}\neq\emptyset$. Then by Theorem \ref{Intersection}, $C_{n,i+1}\cap \left(\left\{\frac{i+2}{2^{n}}\right\}\times[0,1]\right)\neq\emptyset$. Continuing inductively, for each $i<j<2^{n}$, we may choose a component $C_{n,j}$ of $G(f)\cap V_{\left[\frac{j}{2^{n}},\frac{j+1}{2^{n}}\right]}$ such that $C_{n,j-1}\cap C_{n,j}\neq\emptyset$ and $C_{n,j}\cap \left(\left\{\frac{j+1}{2^{n}}\right\}\times[0,1]\right)\neq\emptyset$. By a similar argument, for $0\leq j<i$, we may choose a component $C_{n,j}$ of $G(f)\cap V_{\left[\frac{j}{2^{n}},\frac{j+1}{2^{n}}\right]}$ such that $C_{n,j}\cap C_{n,j+1}\neq\emptyset$ and $C_{n,j}\cap \left(\left\{\frac{j}{2^{n}}\right\}\times[0,1]\right)\neq\emptyset$. So for $0\leq j<2^{n}$ there are components $C_{n,j}$ of $G(f)\cap V_{\left[\frac{j}{2^{n}},\frac{j+1}{2^{n}}\right]}$ such that $C_{n,j}\cap C_{n,k}\neq\emptyset$ if and only if $|j-k|\leq 1$. Define $G(g_{n})=\bigcup_{j< 2^{n}}C_{n,j}$. Then $g_{n}$ is upper semicontinuous and has a connected graph. Since for any $i<2^{n}$ and component $C_{n,i}$ of $G(f)\cap V_{\left[\frac{i}{2^{n}},\frac{i+1}{2^{n}}\right]}$ there is some $g_{n}\in\mathcal{G}_{n}$ with $C_{n,i}\subseteq G(g_{n})$, $\bigcup_{g_{n}\in\mathcal{G}_{n}}G(g_{n})=G(f)$.

Let $\{G(g_{n})\}_{n\in\mathbb{N}}$ be a sequence of subcontinua of $G(f)$ where $g_{n}\in\mathcal{G}_{n}$ for each $n$. By Lemma \ref{Hyperspace}, there is a convergent subsequence $\{G(g_{n_{k}})\}_{k\in\mathbb{N}}$ such that $\lim G(g_{n_{k}})$ is a subcontinuum of $G(f)$. Define $g$ by $G(g)=\lim G(g_{n_{k}})$. Then $g$ is usc and has a connected graph. Furthermore $(x,y)\in G(g)$, as $(x,y)\in G(g_{n_{k}})$ for every $k$. Since $\left(\{0\}\times g_{n_{k}}(0)\right)\in 2^{[0,1]^{2}}$ for each $k$, there is a convergent subsequence of $\left\{\{0\}\times\{g_{n_{k}}(0)\}\right\}$. Then $\overline{\lim} \left(\{0\}\times g_{n_{k}}(0)\right)$ is a nonempty subset of $\overline{\lim} G(g_{n_{k}})=\lim G(g_{n_{k}})=G(g)$. Thus $g(0)\neq\emptyset$. Similarly $g(1)\neq\emptyset$. As $G(g)$ is connected, $g$ is a usc function on $[0,1]$. Let $\mathcal{F}$ consist of all such functions $g$ for any $(x,y)\in G(f)$ and any sequence $\{G(g_{n})\}_{n\in\mathbb{N}}$ where $g_{n}\in\mathcal{G}_{n}$ for each $n$.

In order to show $g(x)$ is connected for each $x\in[0,1]$, let $(x,y),(x,y')\in G(g_{\alpha})$. Then for each $k$ there are points $(x_{n_{k}},y_{n_{k}}),(x_{n_{k}}',y_{n_{k}}')\in G(g_{n_{k}})$ such that $(x_{{n}_{k}},y_{n_{k}})\rightarrow (x,y)$ and $(x_{n_{k}}',y_{n_{k}}')\rightarrow (x,y')$. Let $i_{n_{k}}$ and $j_{n_{k}}$ be the largest and smallest integers respectively such that $x_{n_{k}},x_{n_{k}}'\in\left[\frac{i_{n_{k}}}{2^{n_{k}}},\frac{j_{n_{k}}}{2^{n_{k}}}\right]$. Note that $\frac{i_{n_{k}}}{2^{n_{k}}},\frac{j_{n_{k}}}{2^{n_{k}}}\rightarrow x$ because $x_{n_{k}},x_{n_{k}}'\rightarrow x$. By the construction of the $g_{n_{k}}'s$, $A_{n_{k}}=G(g_{n_{k}})\cap V_{\left[\frac{i_{n_{k}}}{2^{n_{k}}},\frac{j_{n_{k}}}{2^{n_{k}}}\right]}$ is a subcontinuum containing $(x_{n_{k}},y_{n_{k}})$ and $(x_{n_{k}}',y_{n_{k}}')$. So $(x,y),(x,y')\in\underline{\lim}\ A_{n_{k}}$. Then by Theorem \ref{liminf}, $\overline{\lim}\ A_{n_{k}}$ is a connected subset of $\lim G(g_{n_{k}})=G(g)$ containing $(x,y)$ and $(x,y')$. Since $\frac{i_{n_{k}}}{2^{n_{k}}},\frac{j_{n_{k}}}{2^{n_{k}}}\rightarrow x$, $\overline{\lim}A_{n_{k}}\subseteq\left(\{x\}\times f(x)\right)$. As $y$ and $y'$ are arbitrary elements of $g(x)$, $g(x)$ is connected. Then by Corollary \ref{PointImage}, $g$ has the Weak Intermediate Value Property.

Next we show that $G(f)=\bigcup_{\alpha\in\Lambda}G(g)$. That $\bigcup_{\alpha\in\Lambda}G(g_{\alpha})\subseteq G(f)$ follows from the fact that $G(g_{\alpha})\subseteq G(f)$ for each $\alpha$. To show $G(f)\subseteq\bigcup_{\alpha\in\Lambda}G(g_{\alpha})$, let $(x,y)\in G(f)$. Then for each $n\in\mathbb{N}$, there is a usc function $g_{n}\in\mathcal{G}_{n}$ such that $(x,y)\in G(g_{n})$. Then there is a convergent subsequence $\{G(g_{n_{k}})\}$. Let $g_{\alpha}\in\mathcal{F}$ be such that $G(g_{\alpha})=\lim\ G(g_{n_{k}})$. Then $(x,y)\in G(g_{\alpha})$.

\end{proof}

\begin{theorem}[Nall, \cite{Nall}] \label{Nall}
Suppose $X$ is a compact metric space, and $\{F_{\alpha}\}_{\alpha\in\Lambda}$ is a collection of closed subsets of $X\times X$ such that for each $x\in X$ and each $\alpha\in\Lambda$, the set $\{y\in X:(x,y)\in F_{\alpha}\}$ is nonempty and connected, and such that $F=\bigcup_{\alpha\in\Lambda}F_{\alpha}$ is a closed connected subset of $X\times X$ such that for each $y\in X$, the set $\{x\in X:(x,y)\in F\}$ is nonempty. Then $\invlim \ F$ is connected.
\end{theorem}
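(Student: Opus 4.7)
The plan is to reduce the problem to Mahavier's criterion (recalled in the introduction), which yields connectedness of $\invlim F$ once each finite Mahavier product
\[
M_n(F) = \{(x_0, \ldots, x_n) \in X^{n+1} : (x_{i+1}, x_i) \in F \text{ for } 0 \leq i < n\}
\]
is connected. I would then induct on $n$; the base case $M_1(F) = F$ is given by hypothesis, so all of the content sits in the inductive step.

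For the inductive step, the decomposition $F = \bigcup_{\alpha} F_\alpha$ supplies a finer structure on $M_n(F)$: for each index tuple $\sigma = (\alpha_0, \ldots, \alpha_{n-1}) \in \Lambda^n$, set
\[
M_n^\sigma = \{(x_0, \ldots, x_n) : x_i \in F_{\alpha_i}(x_{i+1}), \; 0 \leq i < n\}.
\]
Since each $F_{\alpha_i}$ is upper semicontinuous with nonempty connected point values, iterated application of the Ingram--Mahavier theorem shows that $M_n^\sigma$ is nonempty and connected, and plainly $M_n(F) = \bigcup_{\sigma \in \Lambda^n} M_n^\sigma$. It remains to show this union of connected sets is itself connected, and this is where the connectedness of $F$ as a whole (not merely of each $F_\alpha(x)$) must enter.

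My strategy for connecting the pieces is a contradiction argument. Assuming a separation $M_n(F) = A \sqcup B$, each connected $M_n^\sigma$ lies in exactly one part. The projection $p(x_0, \ldots, x_n) = (x_1, x_0)$ carries each $M_n^\sigma$ into $F_{\alpha_0}$, and pushing the separation forward would yield closed sets $p(A), p(B) \subseteq F$ whose union is $F$; connectedness of $F$ forces a shared pair $(y_1, y_0) \in p(A) \cap p(B)$, and from such a pair one would aim to produce a genuine common point of $M_n^{\sigma_A} \cap M_n^{\sigma_B}$ for some $\sigma_A \in \Lambda_A$, $\sigma_B \in \Lambda_B$, contradicting the separation.

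The main obstacle, and the step I expect to be hardest, is precisely this last extraction: a single first index $\alpha_0$ may index $M_n^\sigma$'s landing in both $A$ and $B$ through different continuations $(\alpha_1, \ldots, \alpha_{n-1})$, so the induced classification on $F$-pairs is not a clean partition and $p(A), p(B)$ need not be disjoint. I would try to overcome this by a compactness argument, using Theorem \ref{Hyperspace} to extract Hausdorff limits in $2^{X^{n+1}}$ from near-witnesses straddling the two parts, in the spirit of the construction used in Theorem \ref{Union}, and then coordinating the resulting limit continuum with the inductive hypothesis to realize an actual point in $A \cap B$. It is this hyperspace limit extraction, rather than the set-theoretic decomposition itself, that I expect to form the technical heart of the proof.
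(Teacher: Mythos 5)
The paper does not actually prove this theorem; it is imported from Nall's paper \cite{Nall}, so your attempt can only be measured against Nall's argument. Your overall frame --- reduce to connectedness of the finite Mahavier products, induct on $n$, decompose the $n$-th product into continua built from the $F_{\alpha}$, and bring in the connectedness of $F$ through the projection $p(x_{0},\dots,x_{n})=(x_{1},x_{0})$ --- is the right one. But the decomposition you choose is too fine, and the resulting gap, which you correctly flag as the heart of the matter, is not closed by what you propose. Indexing the pieces by tuples $\sigma=(\alpha_{0},\dots,\alpha_{n-1})\in\Lambda^{n}$ means that pieces lying in different halves $A$ and $B$ of a putative separation can share the same first index $\alpha_{0}$ and even the same initial pair $(x_{1},x_{0})$ while having incompatible continuations, and nothing forces those continuations to meet. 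The hyperspace-limit idea does not rescue this: $A$ and $B$ are disjoint compacta, hence at positive distance, so there are no ``near-witnesses'' straddling the separation from which to extract a limit continuum.

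The fix is to coarsen the decomposition so that only the newly added bond is restricted. View $G_{n}$ as the graph of the usc map $\Phi:G_{n-1}\rightarrow 2^{X}$ given by $\Phi(x_{1},\dots,x_{n})=F(x_{1})$, and for a single $\alpha\in\Lambda$ set $H_{\alpha}=\{(x_{0},x_{1},\dots,x_{n})\in G_{n}: x_{0}\in F_{\alpha}(x_{1})\}$. Each $H_{\alpha}$ is the graph of the usc, nonempty- and connected-valued map $(x_{1},\dots,x_{n})\mapsto F_{\alpha}(x_{1})$ over the continuum $G_{n-1}$ (inductive hypothesis), hence is itself a continuum by the Ingram--Mahavier result, and $G_{n}=\bigcup_{\alpha}H_{\alpha}$. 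Given a separation $G_{n}=A\cup B$, each $H_{\alpha}$ lies wholly in $A$ or wholly in $B$, so $p(A)=\bigcup\{F_{\alpha}: H_{\alpha}\subseteq A\}$ and $p(B)=\bigcup\{F_{\alpha}: H_{\alpha}\subseteq B\}$ are nonempty compact sets whose union is $F$ (the hypothesis that $\{x:(x,y)\in F\}$ is always nonempty is what makes $p$ map $G_{n}$ onto $F$). Connectedness of $F$ then yields $(x_{1},x_{0})\in F_{\alpha}\cap F_{\beta}$ with $H_{\alpha}\subseteq A$ and $H_{\beta}\subseteq B$. The step your version cannot complete now comes for free: since $F_{\alpha}(\cdot)$ and $F_{\beta}(\cdot)$ are both nonempty at every point, any single continuation $(x_{1},y_{2},\dots,y_{n})\in G_{n-1}$ of $x_{1}$ produces the point $(x_{0},x_{1},y_{2},\dots,y_{n})$ lying in both $H_{\alpha}$ and $H_{\beta}$, hence in $A\cap B$, a contradiction. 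No coordination of continuations and no hyperspace extraction is needed.
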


Using the description of usc functions with the Weak Intermediate Value Property from Theorem \ref{Union} together with Theorem \ref{Nall}, we may provide an alternate proof for Theorem \ref{InvLim} in the context of inverse limits with a single bonding map. Suppose $f:[0,1]\rightarrow2^{[0,1]}$ is usc, surjective, and has the Weak Intermediate Value Property and that $G(f)$ is connected. Then there is a collection $\{g_{\alpha}\}_{\alpha\in\Lambda}$ of usc functions $g_{\alpha}:[0,1]\rightarrow C([0,1])$ as described by Theorem \ref{Union}. Letting $F_{\alpha}=G(g_{\alpha})$ for each $\alpha\in\Lambda$, we obtain a collection of subcontinua of $[0,1]^{2}$ such that for each $x\in[0,1]$, $\{y\in[0,1]:(x,y)\in F_{\alpha}\}$ is a nonempty continuum and $F=\bigcup_{\alpha\in\Lambda}G(g_{\alpha})=G(f)$. As $f$ is surjective, for each $y\in[0,1]$, $\{x\in[0,1]:(x,y)\in F\}\neq\emptyset$. Thus $\invlim \ F=\invlim\{[0,1],f\}$ is connected by Theorem \ref{Nall}.

\begin{theorem}\label{IVPIntersection}
Let $f:[0,1]\rightarrow 2^{[0,1]}$ be usc. Then the following are equivalent.
\begin{enumerate}
 \item $f$ has the Intermediate Value Property.
 \item For all $a\leq b$, $G(f)\cap V_{[a,b]}$ is connected and $G(f)\cap V_{[a,b]}=\overline{G(f)\cap V_{(a,b)}}$.
 \end{enumerate}
\end{theorem}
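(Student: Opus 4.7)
The plan is to prove each implication directly.

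For $(1)\Rightarrow(2)$, fix $a<b$. I first establish that $G(f)\cap V_{[a,b]}$ is connected, by contradiction. Suppose $G(f)\cap V_{[a,b]}=A\sqcup B$ with $A,B$ nonempty, closed, and disjoint, and set $\delta=d(A,B)>0$. Since every fiber $f(x)$ is nonempty, $\pi_1(A)$ and $\pi_1(B)$ are closed subsets of $[a,b]$ whose union is $[a,b]$, so by connectedness of $[a,b]$ they meet at some $x^*$. Choosing $y_A<y_B$ with $(x^*,y_A)\in A$ and $(x^*,y_B)\in B$, I would define
$s=\sup\{y\in f(x^*):y_A\le y\le y_B,\ (x^*,y)\in A\}$
and
$s'=\inf\{y\in f(x^*):s\le y\le y_B,\ (x^*,y)\in B\}$;
closedness of $A$, $B$, and $f(x^*)$ forces $s,s'\in f(x^*)$ with $s<s'$ and $f(x^*)\cap(s,s')=\emptyset$. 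Let $y'=(s+s')/2$ and $\eta=(s'-s)/3$. Upper semicontinuity, applied to the disjoint compact sets $f(x^*)$ and $[s+\eta,s'-\eta]$, produces $\rho>0$ so that $f(x)\cap[s+\eta,s'-\eta]=\emptyset$ whenever $|x-x^*|<\rho$. For any $\epsilon\in(0,\rho)$ with $x^*+\epsilon\in[0,1]$ (or $x^*-\epsilon$ in the corner case $x^*=1$) and any $y_R\in f(x^*+\epsilon)$, we have $y_R\le s+\eta<y'$ or $y_R\ge s'-\eta>y'$. Applying IVP to $\{(x^*,s'),(x^*+\epsilon,y_R)\}$ in the first case, and to $\{(x^*,s),(x^*+\epsilon,y_R)\}$ in the second, gives some $x_\epsilon$ strictly between $x^*$ and $x^*+\epsilon$ with $y'\in f(x_\epsilon)$. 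Sending $\epsilon\to 0$, closedness of $G(f)$ forces $(x^*,y')\in G(f)$, contradicting $f(x^*)\cap(s,s')=\emptyset$.

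The inclusion $\overline{G(f)\cap V_{(a,b)}}\subseteq G(f)\cap V_{[a,b]}$ is immediate from the closedness of $G(f)$ and $V_{[a,b]}$, so for the density half of $(2)$ only the points of $G(f)$ with first coordinate $a$ or $b$ need to be reached by interior approximations. Given $y\in f(a)$, for each $n$ large enough that $a+1/n<b$, I pick any $y_n'\in f(a+1/n)$; if $y_n'=y$ I take $(x_n,y_n)=(a+1/n,y)$, and otherwise I choose $y_n$ strictly between $y$ and $y_n'$ with $|y_n-y|<1/n$ and invoke IVP on $\{(a,y),(a+1/n,y_n')\}$ to obtain $x_n\in(a,a+1/n)$ with $y_n\in f(x_n)$. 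The resulting sequence $(x_n,y_n)$ lies in $G(f)\cap V_{(a,b)}$ and converges to $(a,y)$; the case $x=b$ is symmetric.

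For $(2)\Rightarrow(1)$, take $x_1<x_2$ with distinct $y_1\in f(x_1),\ y_2\in f(x_2)$ and $y$ strictly between them. By the density clause of $(2)$ there exist sequences $(\alpha_n,\beta_n),(\gamma_n,\delta_n)\in G(f)\cap V_{(x_1,x_2)}$ converging to $(x_1,y_1)$ and $(x_2,y_2)$ respectively. For $n$ large, $\alpha_n<\gamma_n$ and $y$ lies strictly between $\beta_n$ and $\delta_n$. The connectedness clause of $(2)$ applied on $[\alpha_n,\gamma_n]$ then makes $\pi_2(G(f)\cap V_{[\alpha_n,\gamma_n]})$ a connected subset of $[0,1]$ containing both $\beta_n$ and $\delta_n$, so it contains $y$. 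This produces $x_n\in[\alpha_n,\gamma_n]\subset(x_1,x_2)$ with $y\in f(x_n)$, as required.

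The main obstacle I anticipate is the connectedness direction of $(1)\Rightarrow(2)$: the IVP, unlike continuity, says nothing about the fiber $f(x^*)$ at a single point, so the proof must manufacture a definite gap $(s,s')$ in $f(x^*)$ from the hypothetical separation and then use upper semicontinuity to confine all nearby fibers away from the interior of that gap. Only after this preparation can IVP be used to force $y'$ into $f$ at points arbitrarily close to $x^*$, which the gap forbids. By comparison, the density clause of $(1)\Rightarrow(2)$ and the reverse implication are relatively routine, relying only on direct applications of IVP and on the connectedness of projections.
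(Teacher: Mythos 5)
Your proof is correct, and for the connectedness half of $(1)\Rightarrow(2)$ it takes a genuinely different route from the paper. The paper first shows that the IVP forces every fiber $f(x)$ to be connected (by separating a disconnected fiber with open sets $U_1,U_2$ and using upper semicontinuity), then invokes Theorem \ref{Intersection} to get that every component of $G(f)\cap V_{[a,b]}$ meets $\{a\}\times[0,1]$, and finally glues all components together along the connected set $\{a\}\times f(a)$. You instead attack a hypothetical separation $A\sqcup B$ of $G(f)\cap V_{[a,b]}$ head on: projecting to find a point $x^*$ whose fiber meets both pieces, extracting a genuine gap $(s,s')$ in $f(x^*)$, confining nearby fibers away from the middle of the gap via upper semicontinuity, and then using the IVP to force the midpoint $y'$ back into fibers arbitrarily close to $x^*$ --- in fact you could shortcut the final limit step, since $y'\in[s+\eta,s'-\eta]$ already contradicts $f(x_\epsilon)\cap[s+\eta,s'-\eta]=\emptyset$ directly. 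Your argument is self-contained, which is a real advantage here: the paper's appeal to Theorem \ref{Intersection} formally requires $G(f)$ to be connected, a hypothesis not present in the statement of Theorem \ref{IVPIntersection}, whereas your proof never needs it. Your density argument is also cleaner than the paper's (which splits into cases on whether $f(a)$ is degenerate), and your $(2)\Rightarrow(1)$ is the paper's contrapositive argument run forwards. The only cosmetic blemishes are the unused $\delta=d(A,B)$ and the implicit restriction to $a<b$ (the $a=b$ case of clause $(2)$ is degenerate in the paper's own statement as well).
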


\begin{proof}
$(1\Rightarrow2)$ Suppose $f$ has the Intermediate Value Property. In order to show that for all $a\leq b$ $G(f)\cap V_{[a,b]}$ is connected, we first show that $f(x)$ is connected for each $x\in[0,1]$. By way of contradiction, suppose there is an $x_{1}$ such that $f(x_{1})$ is disconnected. Let $U_{1}$ and $U_{2}$ be two intervals open in $[0,1]$ such that $f(x_{1})\subseteq U_{1}\cup U_{2}$, $ f(x_{1})\cap U_{1}\neq\emptyset$, $f(x_{1})\cap U_{2}\neq\emptyset$ and $\overline{U}_{1}\cap \overline{U}_{2}=\emptyset$.

Since $f$ is usc, there is an open set $V\ni x_{1}$ such that if $x\in V$, then $f(x)\subseteq U_{1}\cup U_{2}$. Let $x_{2}\in V\setminus\{x_{1}\}$ and choose $y_{2}\in f(x_{2})$. Then $y_{2}\in U_{1}$ or $y_{2}\in U_{2}$. Without loss of generality, suppose $y_{2}\in U_{2}$. Let $y_{1}\in f(x_{1})\cap U_{1}$. Then there is some $y\in[0,1]\setminus(U_{1}\cup U_{2})$ strictly between $y_{1}$ and $y_{2}$. Let $x$ be between $x_{1}$ and $x_{2}$. Then since $x\in V$, $f(x)\subseteq U_{1}\cup U_{2}$. So $y\notin f(x)$, contradicting the assumption that $f$ has the Intermediate Value Property. Thus $f(x)$ is connected for all $x\in[0,1]$.

Let $K_{1}$ and $K_{2}$ be components of $G(f)\cap V_{[a,b]}$. Since $f$ has the Intermediate Value Property and therefore the Weak Intermediate Value Property, $K_{1}\cap \left(\{a\}\times [0,1]\right)\neq\emptyset$ and $K_{2}\cap \left(\{a\}\times[0,1]\right)\neq\emptyset$ by Theorem \ref{Intersection}. But because $f(a)$ is connected, $\{a\}\times f(a)$ is connected and intersects both $K_{1}$ and $K_{2}$. So $K_{1}\cap K_{2}\neq\emptyset$ as $K_{1}$ and $K_{2}$ are components. Hence $K_{1}=K_{2}$, making $G(f)\cap V_{[a,b]}$ connected. 

Next we show $G(f)\cap V_{[a,b]}=\overline{G(f)\cap V_{(a,b)}}$. If $f(a)$ is a singleton, then $(a,f(a))\in\overline{G(f)\cap V_{(a,b)}}$ as $f$ is usc. Similarly if $f(b)$ is a singleton, then $(b,f(b))\in\overline{G(f)\cap V_{(a,b)}}$. Suppose $f(a)$ is nondegenerate. Let $\{(a_{n},z_{n})\}_{n\in\omega}$ be a sequence in $G(f)\cap V_{(a,b)}$ such that $a_{n}\rightarrow a$. Taking subsequences if necessary, we may choose the sequence such that $\{z_{n}\}_{n\in\omega}$ converges to some $z\in f(a)$. So $(a,z)\in\overline{G(f)\cap V_{(a,b)}}$. 

Let $y\in f(a)\setminus\{z\}$ and let $\epsilon>0$ such that $|z-y|>\epsilon$. Since $z_{n}\rightarrow z$, there is $N\in\mathbb{N}$ such that if $n\geq N$, then $y$ is not between $z_{n}$ and $z$. Let $y_{\epsilon}$ be strictly between $y$ and $z_{n}$ for all $n\geq N$ such that $|y-y_{\epsilon}|<\epsilon$. Since $f$ has the Intermediate Value Property, for each $n\geq N$ there is some $x_{n}\in(a,a_{n})$ such that $y_{\epsilon}\in f(a_{n})$. Then $a_{n}\rightarrow a$ and $(a,y_{\epsilon})\in \overline{G(f)\cap V_{(a,b)}}$. By the same argument, for all $0<\delta<\epsilon$ there is some $y_{\delta}$ such that $(a,y_{\delta})\in \overline{G(f)\cap V_{(a,b)}}$. Since $d((a,y),(a,y_{\delta}))=\delta$, $(a,y)\in\overline{G(f)\cap V_{(a,b)}}$. Therefore $\{a\}\times f(a)\subseteq \overline{G(f)\cap V_{(a,b)}}$. By a similar argument, $\{b\}\times f(b)\subseteq \overline{G(f)\cap V_{(a,b)}}$. Thus $\overline{G(f)\cap V_{(a,b)}}=G(f)\cap V_{[a,b]}$.

$(2\Rightarrow1)$ The converse follows by contradiction. Suppose for all $a\leq b$, $G(f)\cap V_{[a,b]}$ is connected and $G(f)\cap V_{[a,b]}=\overline{G(f)\cap V_{(a,b)}}$, but $f$ does not have the Intermediate Value Property. Then there is some distinct $x_{1}, x_{2}$, $y_{1}\in f(x_{1})$, $y_{2}\in f(x_{2})$, and $y$ strictly between $y_{1}$ and $y_{2}$ such that $y\notin f(x)$ for all $x$ strictly between $x_{1}$ and $x_{2}$. There are four cases depending on which of $x_{1}$ and $x_{2}$ is larger and which of  $y_{1}$ and $y_{2}$ is larger. Suppose $x_{1}<x_{2}$ and $y_{1}<y_{2}$. The proofs for the remaining cases  are similar.

Since $G(f)\cap V_{[x_{1},x_{2}]}=\overline{G(f)\cap V_{(x_{1},x_{2})}}$, there are $x_{1}<a'<b'<x_{2}$, $y_{a'}\in f(a')$ with $y_{a'}<y$ , and $y_{b'}\in f(b')$ with $y_{b'}>y$. So $y$ is strictly between $y_{a'}$ and $y_{b'}$, but $y\notin f(x)$ for any $x\in[a',b']$. Thus $G(f)\cap V_{[a',b']}$ is disconnected, a contradiction.
\end{proof}

\section{Examples}

The first three examples demonstrate that Theorem \ref{InvLim} is sharp in that the conditions on the bonding functions cannot be dropped. Example \ref{Sufficient} demonstrates that the inverse limit may be connected even if the bonding functions do not have the Weak Intermediate Value Property.

\begin{example}[Nall]\label{NoIVP}
Let $f:[0,1]\rightarrow2^{[0,1]}$ be given by \[f(x)=\left\{\begin{array}{lr}
\frac{1}{3}x & 0\leq x<\frac{1}{2} \\
\left\{\frac{1}{3}x, 2x-1\right\} & \frac{1}{2}\leq x\leq 1
\end{array}\right.\]
Then $f$ is an upper semicontinuous surjective function such that $G(f)$ is connected that does not have the Weak Intermediate Value Property. Yet for each $x_{1}\in[0,1]$ for all $y_{1}\in f(x_{1})$ and $x_{2}\geq x_{1}$, there is some $y_{2}\in f(x_{2})$ such that if $y$ is between $y_{1}$ and $y_{2}$, there is some $x$ between $x_{1}$ and $x_{2}$ such that $y\in f(x)$. Let $(x_{1},y_{1})=\left(\frac{1}{2},0\right)$ and $x_{2}=\frac{1}{4}$. Since $f\left(\frac{1}{4}\right)=\left\{\frac{1}{12}\right\}$, $y_{2}$ must be $\frac{1}{12}$. But $\frac{1}{24}\notin f(x)$ for any $x\in\left[\frac{1}{4},\frac{1}{2}\right]$. So $f$ does not have the Weak Intermediate Value Property. Further, Nall shows in \cite{Nall} that $G(f^{2})$ has an isolated point at $(1,0)$, so $\invlim\{[0,1],f\}$ is not connected.
\end{example}

\begin{figure}[H]
\begin{center}
 \begin{tikzpicture}[x=3.5cm, y=3.5cm] 
\draw (0,1) -- (1,1);
\draw (1,0) -- (1,1);
\draw (0,0) -- (1,0);
\draw (0,0) -- (0,1);
\draw[blue,domain=0:1,samples=1000] plot (\x, {0.5*\x});
\draw[blue,domain=0.5:1,samples=1000] plot (\x, {2*\x-1});
\node[label={left:{\tiny{$(0,0)$}}},circle,fill,blue,inner sep=1pt] at (0,0) {};
\node[label={below:{\tiny{$(1/2,0)$}}},circle,fill,blue,inner sep=1pt] at (0.5,0) {};
\node[label={right:{\tiny{$(1,1/2)$}}},circle,fill,blue,inner sep=1pt] at (1,0.5) {};
\node[label={right:{\tiny{$(1,1)$}}},circle,fill,blue,inner sep=1pt] at (1,1) {};

    \draw (1.5,0) -- (2.5,0);
    \draw (1.5,0) -- (1.5,1);
    \draw (1.5,1) -- (2.5,1);
    \draw (2.5,0) -- (2.5,1);
\node[label={below:{\tiny{$(0,0)$}}},circle,fill,blue,inner sep=1pt] at (1.5,0) {};
\node[label={below:{\tiny{$(1/2,0)$}}},circle,fill,blue,inner sep=1pt] at (2,0) {};
\node[label={below right:{\tiny{$(3/4,0)$}}},circle,fill,blue,inner sep=1pt] at (2.25,0) {};
\node[label={right:{\tiny{$(1,1/2)$}}},circle,fill,blue,inner sep=1pt] at (2.5,0.5) {};
\node[label={right:{\tiny{$(1,1)$}}},circle,fill,blue,inner sep=1pt] at (2.5,1) {};
\node[label={right:{\tiny{$(1,1/4)$}}},circle,fill,blue,inner sep=1pt] at (2.5,0.25) {};
\node[label={right:{\tiny{$(1,0)$}}},circle,fill,blue,inner sep=1pt] at (2.5,0) {};
    \draw[blue] (1.5,0) -- (2.5,0.25);
  \draw[blue] (2,0) -- (2.5,0.5);
  \draw[blue] (2.25,0) -- (2.5,1);

\end{tikzpicture}
 \end{center}
 \caption{$G(f)$ (left) and $G(f^{2})$ (right) from Example \ref{NoIVP}} \label{Fig1}
\end{figure}
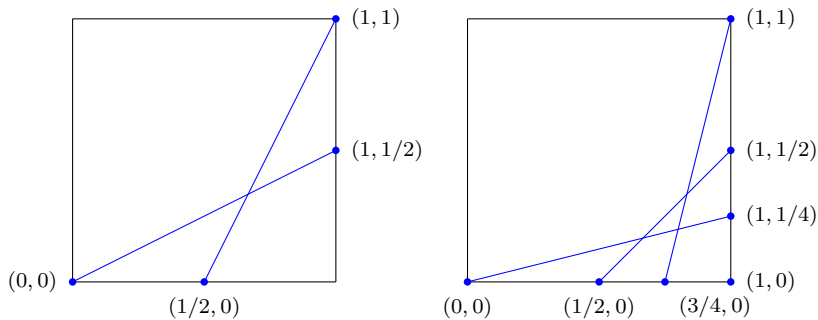

\begin{example}\label{DisconnectedGraph}
The function $f:[0,1]\rightarrow 2^{[0,1]}$ given by $f(x)=\left\{\frac{2}{3}x,\frac{2}{3}x+\frac{1}{3}\right\}$ is an upper semicontinuous function that is surjective and satisfies the Weak Intermediate Value Property. But $G(f)$ is not connected, therefore $\invlim\{[0,1],f\}$ is not connected.
\end{example}

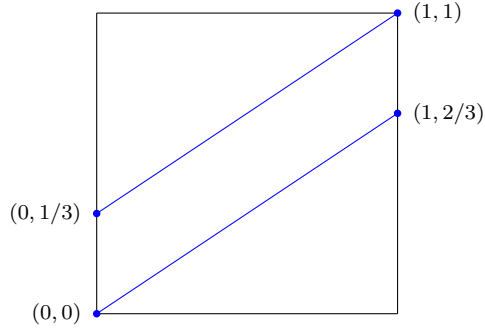
\begin{figure}[H]
\begin{center}
 \begin{tikzpicture}[x=4cm, y=4cm] 
\draw (0,1) -- (1,1);
\draw (1,0) -- (1,1);
\draw (0,0) -- (1,0);
\draw (0,0) -- (0,1);
\draw[blue,domain=0:1,samples=1000] plot (\x, {2/3*\x});
\draw[blue,domain=0:1,samples=1000] plot (\x, {2/3*\x+1/3});
\node[label={left:{\tiny{$(0,0)$}}},circle,fill,blue,inner sep=1pt] at (0,0) {};
\node[label={left:{\tiny{$(0,1/3)$}}},circle,fill,blue,inner sep=1pt] at (0,1/3) {};
\node[label={right:{\tiny{$(1,2/3)$}}},circle,fill,blue,inner sep=1pt] at (1,2/3) {};
\node[label={right:{\tiny{$(1,1)$}}},circle,fill,blue,inner sep=1pt] at (1,1) {};
\end{tikzpicture}
 \end{center}
 \caption{$G(f)$ from Example \ref{DisconnectedGraph}} \label{Fig2}
 \end{figure}

\begin{example}\label{NotSurjective}
The function $f:[0,1]\rightarrow 2^{[0,1]}$ given by $f(x) = \left\{\frac{1}{4},\frac{3}{4}x+\frac{1}{4}\right\}$ is upper semicontinuous, satisfies the Weak Intermediate Value Property, and has a connected graph. But $f$ is not surjective. Specifically if $y\in\left[0,\frac{1}{4}\right)$, there is no $x$ with $y\in f(x)$. Thus $\invlim\{[0,1],f\} = \invlim\left\{\left[\frac{1}{4},1\right],f|_{\left[\frac{1}{4},1\right]}\right\}$ which is not connected.
\end{example}

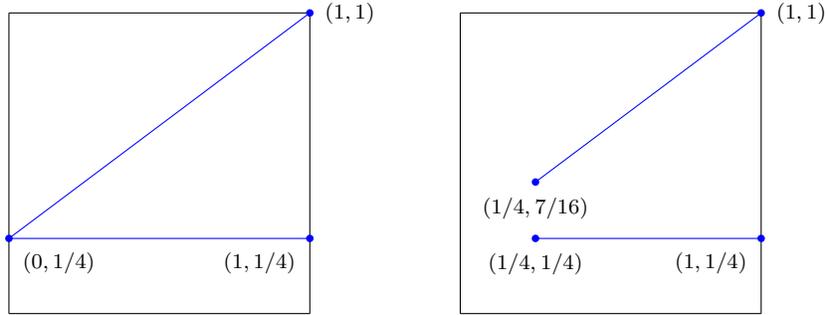
\begin{figure}[H]
  \begin{center}
      \begin{tikzpicture}[x=4cm, y=4cm] 
    \draw (0,0) -- (1,0);
    \draw (0,0) -- (0,1);
    \draw (0,1) -- (1,1);
    \draw (1,0) -- (1,1);
  \draw[blue,domain=0:1,samples=1000] plot (\x, {1/4});
  \draw[blue,domain=0:1,samples=1000] plot (\x, {3*\x/4+(1/4)});
\node[label={below left:{\tiny{$(1,1/4)$}}},circle,fill,blue,inner sep=1pt] at (1,0.25) {};
\node[label={right:{\tiny{$(1,1)$}}},circle,fill,blue,inner sep=1pt] at (1,1) {};
\node[label={below right:{\tiny{$(0,1/4)$}}},circle,fill,blue,inner sep=1pt] at (0,0.25) {};
    \draw (1.5,0) -- (2.5,0);
    \draw (1.5,0) -- (1.5,1);
    \draw (1.5,1) -- (2.5,1);
    \draw (2.5,0) -- (2.5,1);
\node[label={below:{\tiny{$(1/4,1/4)$}}},circle,fill,blue,inner sep=1pt] at (1.75,0.25) {};
\node[label={below:{\tiny{$(1/4,7/16)$}}},circle,fill,blue,inner sep=1pt] at (1.75,7/16) {};
\node[label={below left:{\tiny{$(1,1/4)$}}},circle,fill,blue,inner sep=1pt] at (2.5,0.25) {};
\node[label={right:{\tiny{$(1,1)$}}},circle,fill,blue,inner sep=1pt] at (2.5,1) {};
    \draw[blue,domain=1.75:2.5,samples=1000] plot (\x, {1/4});
  \draw[blue,domain=1.75:2.5,samples=1000] plot (\x, {3*(\x-1.5)/4+(1/4)});
  \end{tikzpicture}
     \end{center} 
     \caption{$G(f)$ (left) and $G(f|_{\left[\frac{1}{4},1\right]})$ (right) for Example \ref{NotSurjective}} \label{Fig3}
\end{figure}

\begin{example}[Ingram]\label{Sufficient}
Let $f:[0,1]\rightarrow2^{[0,1]}$ be given by \[f(x)=\left\{\begin{array}{lr}
\{0,x\} & 0\leq x\leq\frac{1}{4} \\
0 & \frac{1}{4}\leq x\leq 1 \\
\left[0,1\right] & x=1
\end{array}\right.\]
Then $f$ does not have the Weak Intermediate Value Property but $\invlim\{[0,1],f\}$ is connected. If $(x_{1},y_{1})=\left(\frac{1}{4},\frac{1}{4}\right)$ and $x_{2}=\frac{1}{2}$, then $y_{2}=0$. But $\frac{1}{8}\notin f(x)$ for any $x\in\left[\frac{1}{4},\frac{1}{2}\right]$. Ingram gives a proof that $\invlim\{[0,1],f\}$ is connected can be found in Example 2.9 pg. 24 of \cite{Ingram}.
\end{example}

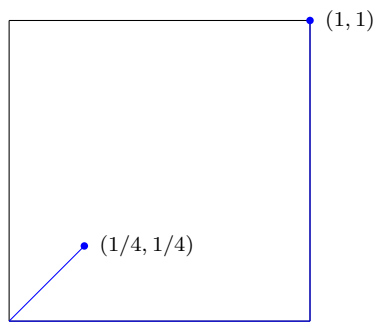
\begin{figure}[H]
  \begin{center}
      \begin{tikzpicture}[x=4cm, y=4cm] 
    \draw (0,0) -- (1,0);
    \draw (0,0) -- (0,1);
    \draw (0,1) -- (1,1);
    \draw (1,0) -- (1,1);
    \draw[blue] (0,0) -- (0.25,0.25);
  \draw[blue] (0,0) -- (1,0);
  \draw[blue] (1,0) -- (1,1);
  \node[label={right:{\tiny{$(1/4,1/4)$}}},circle,fill,blue,inner sep=1pt] at (0.25,0.25) {};
  \node[label={right:{\tiny{$(1,1)$}}},circle,fill,blue,inner sep=1pt] at (1,1) {};
  \end{tikzpicture}
     \end{center}
     \caption{$G(f)$ for Example \ref{Sufficient}}\label{Fig4}
\end{figure}

\section{Acknowledgements}
This research did not receive any specific grant from funding agencies in the public, commercial, or not-for-profit sectors. The author is grateful to David Ryden and Jonathan Meddaugh for their assistance in proofreading this paper.

\end{document}